\numberwithin{equation}{section}
\newtheorem{thm}{Theorem}
\newtheorem{lem}{Lemma}
\newtheorem{defn}{Definition}
\newtheorem{cor}{Corollary}
\newtheorem{prop}{Proposition}
\newtheorem{RM}{Remark}
\newtheorem{ex}{Example}
\def\disp{\displaystyle}
\def\mat#1#2{\left[\!\! \begin{array}{c} #1 \\ #2 \end{array} \!\!\right]}
\def\dy#1{\partial_{y_{#1}}}
\def\dx#1{\partial_{x_{#1}}}
\def\n{\nabla}
\begin{document}
\title[Explicit solutions of SDEs]{On explicit local solutions of It\^o diffusions}

\author[M. Kouritzin]{By Michael A.  Kouritzin}
\curraddr{Department of Mathematical and Statistical Sciences\\
University of Alberta  \\
Edmonton (Alberta)\\
Canada T6G 2G1}
\email{michaelk@ualberta.ca
\newline\indent {\it URL:} http://www.math.ualberta.ca/Kouritzin\_M.html}

\author[B. R\'emillard]{Bruno   R\'emillard}
\curraddr{Service de l'enseignement des m\'ethodes  quantitatives de gestion\\
HEC Montr\'eal \\
Montr\'eal (Qu\'ebec) \\
Canada H3T 2A7}
\email{bruno.remillard@hec.ca
\newline\indent {\it URL:} http://www.hec.ca/pages/bruno.remillard}
\thanks{Corresponding Author: Michael Kouritzin, michaelk@ualberta.ca}
\thanks{Partial funding in support of this work
was provided by the Natural Sciences and Engineering Research Council of
Canada.}
\subjclass{Primary 60H20, Secondary 60H10, 60H35.}
\renewcommand{\subjclassname}{\textup{2010} Mathematics Subject Classification}
\keywords{Diffeomorphism, It\^{o} processes, explicit solutions.}
\dedicatory{\large University of Alberta and HEC Montr\'eal} \maketitle

\begin{abstract}
Strong solutions of $p$-dimensional
stochastic differential equations $dX_t=b(X_t,t)dt+\sigma
(X_t,t)dW_t$, $X_s=x$ that can be represented locally in \emph{explicit simulation}
form $X_t=\phi\left(\int_s^t U_{s,u}dW_u,t\right)$ are considered.
Here; $W$ is a
multidimensional Brownian motion; $U,\phi $ are continuous
functions;
and
$b,\sigma,\phi $ are locally continuously differentiable.
The following three-way equivalence is established:
1) There exists such a representation from all
starting points $(x,s)$,
2) $U,\phi$ satisfies a set differential equations,
and 3) $b,\sigma$ satisfy commutation relations.
Next, construction theorems, based on a diffeomorphism between the solutions
$X$ and the strong solutions to a simpler It\^o integral equation, with a possible
deterministic component, are given.
Finally, motivating examples are provided and reference to its
importance in filtering and option pricing is given. 
\end{abstract}


\section{Introduction}\label{intro}
Inasmuch as computability can be of utmost importance, one often
confines selection of stochastic differential equation (SDE) models to
those facilitating calculation and simulation.
This is perhaps best
exemplified in mathematical finance, where the popularity of the
inaccurate Black-Scholes model is only justifiable through the
evaluation ease of the resulting derivative product formulae.
Indeed, \citet[p. 272]{Kunita:1984} writes in his notes on
SDEs that ``It is an important problem
in applications that we can compute the output from the input
explicitly''.
We shall call such solutions \emph{explicit solutions}.

Filtering applications (see \citet{Kouritzin:1998}), option pricing
applications (see companion paper \citet{Kouritzin16}) and pedagogical considerations initially prompted
our classifications of which It\^o processes $X_t^{x,s}$, starting
at $(x,s)$, are representable as a time-dependent function of a simple
stochastic integral $\phi^{x,s}\left(\int_s^t U_{s,u} dW_u,t\right)$.
However,
our determination of $\phi^{x,s}$, $U_{s,u}$ also facilitates an effective means
of calculation and simulation. To simulate, one merely needs to
compute the Gauss-Markov process $\int_s^tU_{s,u} dW_u$ at discrete
times and substitute these samples into $\phi^{x,s}$, which is often
known in closed form and otherwise is the solution of differential
equations that can be solved numerically a priori.
The idea is applied to strong solutions here and extended to
weak solutions of a popular financial model
in \citet{Kouritzin16}.
$\int_s^tU_{s,u} dW_u=\int_s^tU_{s,u}(X_u) dW_u$ can depend upon $X$ but
not in a way that will destroy its Gaussian distribution nor
make simulation difficult and our explicit solutions are diffusion
solutions for all starting points $(x,s)$.
This Explicit Solution Simulation is without (Euler or Milstein) bias and is extremely efficient, often
orders of magnitude faster than Euler or Milstein
methods when our method is applicable and high accuracy is desired
(see \citet{Kouritzin16}).
Our representations also make properties of certain
stochastic differential equations readily discernible and simplifies
some filtering calculations. Finally, as demonstrated in
\citet[Proposition 5.2.24]{Karatzas/Shreve:1987}, explicit solutions
can be useful in establishing convergence for solutions of
stochastic differential equations.

\citet{Doss:1977} and \citet{Sussmann:1978} were apparently the
first to solve stochastic differential equations through use of
differential equations. In the multidimensional setting, Doss
imposed the Abelian condition on the Lie algebra generated by the
vector fields of coefficients and showed, in this case, that strong
solutions, $X_t^{x}$, of Fisk-Stratonovich equations
are representable as $X_t^{x}=\rho (\Phi (x,W_{\cdot })_t,W_t)$,
for some continuous $\rho $, $\Phi $ solving differential equations.
Under the restriction of $C^\infty $ coefficients,
\citet{Yamato:1979}  extended the work of Doss by dispensing with
the Abelian assumption in favour of less restrictive $q$ step
nilpotency, whilst also introducing a simpler form for his explicit
solutions $X_t^{x}=u(x,t,(W_t^I)_{I\in F})$. Here, $u$ solves a
differential equation, and $(W_t^I)_{I\in F}$ are iterated
Stratonovich integrals with integrands and  integrators selected
from  $\left( t,W_t^1,...,W_t^d\right)$. Another  substantial work
on explicit solutions to stochastic differential equations is due to
\citet{Kunita:1984}[Section III.3]. He considers representing
solutions to time-homogeneous Fisk-Stratonovich equations via flows
generated by the coefficients of the equation under a commutative
condition similar to ours, and, more generally, under solvability of
the underlying Lie algebra. Kunita's work therefore generalizes
\citet{Yamato:1979}. Perhaps, the two most distinguishing features
of our work are: We allow time-dependent coefficients and utilize a
different representation that is very useful in simulation and
other applications (see e.g.\ \cite{Kouritzin:1998}, \cite{Kouritzin16}). 
We compare our results to \citet{Yamato:1979} and \citet{Kunita:1984} in
Section \ref{kun}.

In order to describe our method, we mention that the hitherto rather
ad hoc, state-space diffeomorphism mapping method has been used to
construct solutions to interesting stochastic differential equations
from solutions to simpler ones. 
The idea of this method is to change
the infinitesimal generator $L$ of a simple It\^o process to the
generator $\mathcal L$ corresponding to a more complicated It\^o process via ${
\mathcal L}f(x)= \{L(f\circ \Lambda^{-1} )\}\circ \Lambda(x)$. 
This corresponds to using It\^o's formula on $X_t = \Lambda^{-1}(\xi
_t)$, where
$\xi$ is a diffusion process with infinitesimal generator $L$.
For related
examples, we refer the reader  to the problems in
\citet{Friedman:2006}[page 126] or \citet{Ethier/Kurtz:1986}[page
303]. 

Motivated by applications in filtering, \citet{Kouritzin/Li:2000}
and \citet{Kouritzin:2000} used differential equation methods to
study: ``When can global, time-dependent diffeomorphisms be used to
construct solutions to It\^o equations?", ``What scalar It\^o
equations can be solved via diffeomorphisms?", and ``How can one
construct these diffeomorphisms?". They considered scalar solutions
in an open interval  $D$
 to the time-homogeneous stochastic differential equation
\begin{equation} \label{SDE1}
dX_t=b(X_t)dt+\sigma (X_t)dW_t,\ X_0=x,
\end{equation}
which are of the form $\phi^{x} \left(\int_0^tU_udW_u,t\right)$,
and showed that all nonsingular solutions of this form were actually
(time-dependent) diffeomorphisms $\Lambda^{-1} _t(\xi _t)$ with $\xi $
satisfying
$$
 d\xi _t=( \chi -\kappa \xi_t)dt+dW_t,\ \xi _0 =\Lambda_0(x).
$$
Nonsingular in this scalar case was interpreted as finiteness of $
\int_\lambda ^y\sigma ^{-1}(x)dx$ for some fixed point $\lambda $
and all $ y\in D$. 
(Their methods involve non-stochastic differential equations
that can continue to hold in the singular situations when global
diffeomorphisms fail.)

For our current work, we suppose henceforth that
$D\subset \mathbb R^p$ is a bounded convex domain, $T>0,$
and define
$$
D_T=\left\{\begin{array}{ll} D& \mbox{ if } \sigma,b \mbox{ do not depend on t}\\
D\times [0,T)& \mbox{ if either do}
\end{array} \right.
$$
so $(x,s)\in D_T$ means $x\in D$ when $D_T=D$.
Then, we resolve the question: ``When can
we explicitly solve vector-valued It\^o equations
\begin{equation} \label{VSDE1}
dX_t=b(X_t,t)dt+\sigma (X_t,t)dW_t,\ X_s=x,
\end{equation}
with the dimensions of $X_t, W_t$ being $p,d$ respectively, through
representations of the form $X_t^{x,s}= \phi^{x,s}\left(\int_s^t
U_{s,u} dW_u,t\right)$?''. This question is more precisely broken into
two separate important questions: ``For which $\sigma$ and $b$ does
such a strong-local-solution representation exists?" and ``What
conditions are required on $\phi$ and $U$ for such representations
with $\int_s^tU_{s,u} dW_u=\int_s^tU_{s,u}(X_u) dW_u$ still being Gauss-Markov?"
Equivalently, we consider ``When can the solutions to the
Fisk-Stratonovich equation
\begin{equation}\label{Strato}
dX_t^{x}=h(X_t^{x},t)dt+\sigma (X_t^{x},t)\bullet dW_t,
\end{equation}
with
\begin{equation}
h=b-{\frac 12}\sum_{j=1}^d\{\nabla_\varphi \sigma _j\}\sigma _j\mbox{ on }D_T
\label{bh}
\end{equation}
and $\sigma _j$ denoting the $j^{\rm th}$ column of the matrix $\sigma$, be
locally represented in this manner?" It follows from, for example,
\citet{Kunita:1984}[p. 239] that the unique local solutions to these
(\ref{VSDE1}) and (\ref{Strato}) are equal if (\ref{bh}) holds and
$\sigma $ is twice continuously differentiable or satisfies the
Fisk-Stratonovich acceptable condition in $D$, the latter being
discussed in \citet{Protter:2004}[Chapter 5]. We work with It\^o
equations to avoid these stronger assumptions on $\sigma $ but still
relate $b$ and $h$ through (\ref{bh}). Also, to obtain simple,
concrete necessary and sufficient conditions for such a
representation, we consider all solutions starting from each
$(x,s)\in D_T$. Actually, assuming natural regularity conditions
and using differential form techniques, we obtain very satisfying
answers to these question by showing the equivalence of the
following three conditions: 1) The SDEs (\ref{VSDE1}) have our
local-solution-representations for all starting points $(x,s)\in
D_T$. 2) The representation pair $\phi^{x,s}, U_{s,t}$ satisfy a
system of differential equations. 3) The SDE coefficients
$\sigma$ and $h$ satisfy simple commutator conditions.
In the process of establishing this three-way equivalence, we also answer
the question ``When is (\ref{VSDE1}) locally diffeomorphic to an
SDE with a simple diffusion coefficient?" i.e.\  ``When will it have
a representation as in (\ref{ODE1},\ref{SDE3}) to follow?".
It turns out that this representation facilitates explicit weak solution
of the important financial Heston model as is shown in 
\cite{Kouritzin16}.

Given precise conditions of when an It\^{o} equation has such a
representation, the next natural questions we answer are: ``What form
do the solutions have?" and ``How do you construct such solutions?"
In order to include as many interesting examples as
possible we will only require \emph{local} representation $X_t^{x,s}=\phi^{x,s}
\left(\int_s^tU_{s,u} dW_u,t\right)$ and allow $\sigma $ to have rank
less than $\min(p,d)$. The first opportunity borne out of allowing
the rank of $\sigma (x)$ to be less than $p$ is the ability to
handle time-dependent coefficients, treating time as an extra state.
The second advantage from allowing lesser rank than $\min(p,d)$ is
the extra richness afforded by appending a deterministic equation
into the diffeomorphism solution.
A third, important benefit of this general rank condition is
the possibility of producing explicit \emph{weak} solutions to
SDEs where no explicit strong solution exists (see \cite{Kouritzin16}).
In our construction results, we show that $\phi$ is constructed via
a time-dependent diffeomorphism $\Lambda_t$, which in turn is defined in terms of $\sigma$.
The diffeomorphism separates a representable SDEs into deterministic and stochastic differential equations:
$\Lambda_t(X_t)=(\overline X_t,\widetilde X_t)$, where
$\widetilde X_t \in \mathbb R^{p-r}$ is deterministic and satisfies
the differential equation
\begin{equation}\label{ODE1}
	\frac{d}{dt}\widetilde X_t  = \widetilde h(\widetilde X_t,t),
\end{equation}
while $\overline X_t$ is a
Gauss-Markov process satisfying
\begin{equation}\label{SDE3}
d\overline X_t = (\overline \theta(\widetilde X_t,t)+\overline \beta(\widetilde X_t,t)\overline X_t) dt+ \left(I_r\Big{|}\; \overline \kappa(\widetilde X_t,t)\right)dW_t.
\end{equation}
$\overline \kappa$ is determined (within an equivalence class) by $\sigma$ while 
$\overline \theta$, $\widetilde h$ and 
and $\overline \beta$ can be anything (subject to dimensional and differentiability regularity conditions).
These parameters allow us to handle a whole class of \emph{nonlinear} drift coefficients $b$ for a given $\sigma$
in the SDE (\ref{VSDE1}) for $X_t=\Lambda_t^{-1}(\overline X_t,\widetilde X_t)$.

In the next section, we introduce notation and state the main existence
results. In Section 3, we build off of these existence results to give our
construction results, illustrated with simple applications.
We compare our work to prior work of Yamato and Kunita in Section 4.
The proofs of all main results are postponed to Section 5.


\section{Notation and Existence Results} \label{main}

Let $(W_t)_{t\ge 0}$ be a standard $d$-dimensional Brownian motion
with respect to filtration $\{{ \mathcal F}_t\}_{t\ge 0}$ satisfying
the usual hypotheses on a complete probability space $(\Omega,{ \mathcal F},P)$.
We will use $\phi$ to denote a representation function and $x$ to denote a starting point as in the introduction.
On the other hand, $\varphi$ will denote a variable with the same dimension $p$ as $\phi$ and $x$.

For functions of time or paths of a stochastic process, we use $Z_t$ and $Z(t)$ interchangeably.
For a matrix $V$, $V_j$ will denote its $j^{\rm th}$ column vector and $V_{i,j}$ the
$i^{\rm th}$ element of this $j^{\rm th}$ column.

$B_z(\delta)$ denotes an open Euclidean ball centered at $z$ with radius $\delta>0$.
Suppose $m,r\in \mathbb N$, $O\subset \mathbb R^m$ is open and $I\subset [0,T)$ is an interval.
Then, $C(I)$ is the continuous functions on $I$ and $C^r(O)$ denotes the continuous functions whose partial derivatives up to order $r$
exist and are continuous on $O$.
Moreover, $C^{r,1}(O\times I)$ denotes the continuous functions $g(\varphi,t)$ whose mixed
partial derivatives in $\varphi\in O$ up to order $r$
and in $t\in I$ up to order $1$ all exist and are continuous functions on $O\times I$.
$C^{1}(O\times I)=C^{1,0}(O\times I)\cap C^{0,1}(O\times I)$.
(We only require one-sided derivatives in time to exist at interval endpoints.)
For such functions of both $\varphi$ and $t$,
$\nabla_\varphi g $ is the Jacobian matrix
of vector function $g$, that is $\left( \nabla_\varphi g \right)
_{i,j}=\partial _{\varphi_j}g_i$, while $\nabla g$ will include the
time derivative as the last column.

The purpose of our representations is to simulate a large class of processes
in an efficient manner, which leads to a dilemma.
We would like to allow $U_{s,t}$ to depend upon $X^{x,s}$ for generality
but not in a way that would destroy the ease of simulation.
Our approach to this dilemma is to allow $U_{s,t}$ to act as an operator on the functions 
$\phi^{x,s }(y_u,u)\big|_{u\in[s,t]}$ but then impose the condition
that the result $U_{s,t}\phi^{x,s }(y_\cdot,\cdot)$ can not depend upon $y$.
As we will expose below, this basically allows $U_{s,t}$ to depend upon
some hidden deterministic part of $X$ but not the purely stochastic part, saving
the Gaussian nature of
\begin{equation}\label{Ydefn}
Y^s_t=\int_s^t U_{s,u}\phi(Y_\cdot^s,\cdot)dW(u)=\int_s^t U_{s,u}\phi(0,\cdot)dW(u)
\end{equation}
so it can be computed off-line, which is the point of this work.
Then, $\phi$ must be differentiable enough to apply
It\^{o}'s formula and allow room for random process $Y^{s}_t$ to move.
Finally, we want $U_{s,t}\phi$ to satisfy some type of simple state
equation so it is easy to compute.
The precise regularity conditions for potential representations 
$X_t^{x,s}= \phi^{x,s}\left(Y^s_t,t\right)$ now follow:
\begin{itemize}
\item[$\mathcal{C}_1$: ] For each $(x,s)\in D_T$, there is  a $t_0=t_0^{x,s}>s$ and
a convex neighbourhood ${\mathcal N}^{x,s}\subset \mathbb R^d$ of $0$
such that $\phi^{x,s } \in C^{2,1}({\mathcal N}^{x,s}\times [s,t_0); \mathbb R^p)$ and
$t\rightarrow U_{s,t}\phi^{x,s }(y_\cdot,\cdot) \in C^{1}( [s,t_0);\mathbb R^{d\times d})$.
\item[$\mathcal{C}_2$: ] $\phi ^{x,s},U_{s,t}$ start correctly
\begin{equation}
	\phi ^{x,s}(0,s)  =  x,\ \ U_{s,s}\phi^{x,s}(0,s)=I_d\ \forall (x,s)\in D_T. \label{initphi} 
\end{equation}
\item[$\mathcal{C}_3$: ] $U_{s,t}\phi^{x,s}$ is non-singular on ${\mathcal N}^{x,s}\times [s,t_0)$ (with matrix inverse denoted $U^{-1}_{s,t}\phi^{x,s}$) and satisfies 
\begin{equation}\label{Urestrict}
U_{s,t}\phi^{x,s }(y_u,u)=U_{s,t}\phi^{x,s }(0,u)
\end{equation}
as well as
\begin{equation}\label{Ugroup}
\ \ \ \ \ \ \ \ \ U^{-1}_{s,t}\phi^{x,s}(y_t,t)\frac{d}{dt}U_{s,t}\phi^{x,s}(y_u,u)\big|_{u=t}
=\frac{d}{dt}U_{u,t}\phi^{\phi^{x,s}(y_u,u),u}(y_u,u)\big|_{u=t}.
\end{equation}
\end{itemize}
Then, (\ref{initphi},\ref{Ugroup}) imply
\begin{equation}\label{Ugroup1}
U^{-1}_{s,t}\phi^{x,s}(y_t,t)\frac{d}{dt}U_{s,t}\phi^{x,s}(y_u,u)\big|_{u=t}
=U^{-1}_{t,t}\phi^{\phi^{x,s}(y_t,t),t}\frac{d}{dt}U_{u,t}\phi^{\phi^{x,s}(y_u,u),u}\big|_{u=t}
\end{equation}
and therefore that $U$ is a (two parameter) semigroup.
We use (\ref{Urestrict}) to economize the notation $U_{s,t}\phi^{x,s }(y_\cdot,\cdot)$
to $U_{s,t}\phi^{x,s}$.


Now, define the $\mathcal{F}_t$-stopping time
\[
\tau^{x,s} = \min\left( t_0^{x,s},\inf\{t > s: \;
Y^{s}_t\notin{\mathcal N}^{x,s} \mbox{ or } (\phi^{x,s}(Y^{s}_t,t),t) \not \in D_T
\}\right)
\]
and let 
\begin{equation} \label{Rdefn}
\mathcal R^{x,s}=\mathop{\cup}\limits_{t\ge 0}\left\{(y,t):P((Y^{s}_t,t)\in B_{(y,t)}(\delta),t\le\tau^{x,s} )>0\ \forall \delta>0\right\}.
\end{equation}
There is structure that can be imposed upon $\phi,U$ that will turn out to be equivalent to the existence of our explicit strong local solutions.
\begin{defn}\label{RepPair}
An $(x,s,\sigma,h)$-representation is a pair $\phi^{x,s},U_{s,t}$ satisfying ($\mathcal{C}_1, \mathcal{C}_2, \mathcal{C}_3$) such that the following system of differential equations:
\begin{eqnarray}
\nabla _y\phi^{x,s}(y,t)& = & \sigma (\phi^{x,s}(y,t),t) U^{-1}_{s,t}\phi^{x,s}, \label{grad} \\
\partial _t\phi^{x,s}(y,t) & = & h(\phi^{x,s}(y,t),t) \label{dtphi}
\end{eqnarray}
hold for all $(y,t)\in{\mathcal R}^{x,s}$ and $\partial_s \nabla_y \phi^{x,s}(0,s)$,
$\partial_s \partial_t \phi^{x,s}(0,s)$, $\partial_{x_i} \nabla_y \phi^{x,s}(0,s)$
and $\partial_{x_i} \partial_t \phi^{x,s}(0,s)$ exist as continuous functions.
Here and below, $\partial_t \phi^{x,s}(0,s)$ means $\partial_t \phi^{x,s}(0,t)\big|_{t=s}$. 
\end{defn}

Now, our explicit solutions are:
\begin{equation} \label{rep}
X^{x,s}_t=\phi (Y_t,t)=\phi^{x,s}(Y^{s}_t,t) \mbox{ on }
[s,\tau^{x,s}).
\end{equation}
Our first main result establishes two necessary and sufficient conditions
for all $X^{x,s}$, defined in (\ref{rep}), to be strong local solutions to
\begin{equation} \label{SDE4}
 dX_t=b(X_t,t)dt+\sigma (X_t,t)dW_t,\qquad  X_s=x
\end{equation}
on $[s,\tau^{x,s})$.
The function $h$ is
always related to $b$ through (\ref{bh}) and $U_{s,t}\phi^{x,s}$ comes into the necessary and sufficient commutator conditions through generator
\begin{equation} \label{SecondA}
A(x,s)=\frac{d}{dt}  U_{s,t}\phi^{x,s}\big|_{t=s}.
\end{equation}
It follows from (\ref{Urestrict}) that $A$ does not depend upon $y$.


\medskip
\begin{thm}\label{r1}
The following are
equivalent:
\begin{enumerate}
\item[a)] 
$\sigma\in C^1(D_T;\mathbb R^{p\times d})$, $h\in C^1(D_T;\mathbb R^p)$, 
there is a unique strong solution to (\ref{SDE4}) on $[s,\tau^{x,s})$ 
for each $(x,s)\in D_T$, and this solution has explicit form 
$\phi^{x,s}(Y_t^s,t)$ with $\phi^{x,s},U_{s,t}$ satisfying $\mathcal{C}_1,\mathcal{C}_2,\mathcal{C}_3$.
\item[b)] There is a $(x,s,\sigma,h)$-representation $\phi^{x,s},U_{s,t}$ for each $(x,s)\in D_T$.
\item[c)] 
$\sigma\in C^1(D_T;\mathbb R^{p\times d})$, $h\in C^1(D_T;\mathbb R^p)$ and 
the following commutator conditions hold on $D_T$:
\begin{equation}\label{braket2}
(\nabla_\varphi \sigma _k)\sigma _j=(\nabla_\varphi \sigma _j)\sigma _k,
\mbox{ for all } j,k\in \{1,...,d\},
\end{equation}
\begin{equation} \label{h}
(\nabla_\varphi h)\sigma _j
=(\nabla_\varphi \sigma _j)h+\partial_t\sigma_j-\sigma A_j,\mbox{ for all } 1\le j\le d.
\end{equation}
\end{enumerate}
\end{thm}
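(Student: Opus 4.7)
My plan is to close the logical cycle (a) $\Rightarrow$ (b) $\Rightarrow$ (c) $\Rightarrow$ (b), and then use (b) $\Rightarrow$ (a) to finish, since (b) $\Rightarrow$ (a) is just Itô's formula applied to a candidate that solves the PDE system. Concretely, given a pair $\phi^{x,s},U_{s,t}$ solving (\ref{grad})–(\ref{dtphi}), define $X_t^{x,s}=\phi^{x,s}(Y_t^s,t)$. Because $U_{s,t}\phi^{x,s}$ is independent of $y$ by (\ref{Urestrict}), the process $Y^s$ is a genuine Gauss--Markov Itô integral and Itô's formula gives
\begin{equation*}
dX_t = \partial_t\phi^{x,s}\,dt + \nabla_y\phi^{x,s}\,dY^s_t + \tfrac12\!\sum_{j=1}^d \partial_{y_j}^2 \phi^{x,s}\bigl((U_{s,t}\phi^{x,s})_j,(U_{s,t}\phi^{x,s})_j\bigr)\,dt.
\end{equation*}
Substituting (\ref{grad}) into the $dW$ term yields $\sigma(X_t,t)\,dW_t$ exactly, while the drift becomes $h(X_t,t)+\tfrac12\sum_j(\nabla_\varphi\sigma_j)\sigma_j$, which equals $b(X_t,t)$ by (\ref{bh}). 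Uniqueness follows from the $C^1$ regularity of $\sigma,h$ and the convex structure of $D_T$ giving local Lipschitz coefficients. The reverse direction (a) $\Rightarrow$ (b) runs the same Itô calculation in reverse: match the $dW$ coefficient of $d[\phi^{x,s}(Y_t^s,t)]$ to $\sigma$ to force (\ref{grad}) on $\mathcal R^{x,s}$ (using nonsingularity of $U_{s,t}\phi^{x,s}$ from $\mathcal C_3$), then the drift identification together with (\ref{bh}) forces (\ref{dtphi}).

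For (b) $\Rightarrow$ (c), the commutator conditions come from equality of mixed partials. Differentiating (\ref{grad}) in $y_j$, evaluating at $(y,t)=(0,s)$ where $U_{s,s}\phi^{x,s}=I_d$ and $\partial_{y_j} U=0$ (again by (\ref{Urestrict})), I obtain
\begin{equation*}
\partial_{y_j}\partial_{y_k}\phi^{x,s}(0,s)=(\nabla_\varphi\sigma_k)(x,s)\,\sigma_j(x,s),
\end{equation*}
and symmetry in $j,k$ yields (\ref{braket2}). Similarly, equating $\partial_t\partial_{y_k}\phi^{x,s}(0,s)$ obtained by differentiating (\ref{grad}) in $t$ (using $\partial_t(U^{-1})|_{t=s}=-A$ from (\ref{SecondA})) with $\partial_{y_k}\partial_t\phi^{x,s}(0,s)=(\nabla_\varphi h)\sigma_k$ from (\ref{dtphi}) produces (\ref{h}). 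The regularity assertions on $\sigma,h$ transfer from the regularity of $\phi,U$ assumed in $\mathcal C_1$ and in Definition \ref{RepPair}.

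The main obstacle is (c) $\Rightarrow$ (b): constructing $\phi^{x,s}$ and $U_{s,t}$ from scratch given only the commutator relations. I would proceed in two stages. First, freeze $t=s$ and view (\ref{grad}) as an overdetermined first-order PDE system in $y$ for $\phi^{x,s}(\cdot,s)$ with $\phi^{x,s}(0,s)=x$; condition (\ref{braket2}) is precisely the Frobenius integrability condition $\partial_{y_j}\partial_{y_k}=\partial_{y_k}\partial_{y_j}$, so a local $C^2$ solution exists on some ball $\mathcal N^{x,s}$ (this is the standard argument behind the diffeomorphism construction referenced from \citet{Kouritzin/Li:2000}, suitably extended to the vector case). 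Second, propagate in $t$ by defining $U_{s,t}$ as the $d\times d$ matrix solution of the linear ODE $\tfrac{d}{dt}U_{s,t}=U_{s,t}\cdot A(\phi^{x,s}(0,t),t)$, $U_{s,s}=I$ (which guarantees $\mathcal C_3$ and the semigroup property (\ref{Ugroup1})), and extending $\phi^{x,s}(y,\cdot)$ by the ODE (\ref{dtphi}). Condition (\ref{h}) is exactly what is needed to verify that the propagated $\phi^{x,s}$ continues to satisfy the spatial equation (\ref{grad}) for all $t$, i.e.\ that $\partial_t[\nabla_y\phi^{x,s}-\sigma U^{-1}]=0$ along the flow. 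Checking the mixed-derivative and boundary regularity required in Definition \ref{RepPair} is a careful but routine bookkeeping exercise using the $C^1$ hypotheses on $\sigma,h,A$. This Frobenius-plus-propagation construction is the technical heart of the equivalence.
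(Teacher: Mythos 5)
Your proposal is correct and follows essentially the same route as the paper: (a)$\Leftrightarrow$(b) by It\^o's formula and coefficient matching (the drift identification resting on the identity $\sum_j(\nabla_\varphi\sigma_j)\sigma_j=\sum_{l,m}(UU^{\top})_{lm}\partial_{y_m}\partial_{y_l}\phi$, which is derived from (\ref{grad})), and (b)$\Leftrightarrow$(c) by identifying the commutator conditions with the integrability conditions of the overdetermined system (\ref{grad})--(\ref{dtphi}). The paper merely packages your Frobenius-plus-time-propagation construction as exactness of the combined $1$-form in $(y,t)$ on the convex neighbourhood, and obtains the necessity direction by letting $t\searrow s$ in the same mixed-partial identities you use.
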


\begin{RM}\label{timeinvcase}
Theorem \ref{r1} simplifies in the time-invariant $h,\sigma$ coefficient case.
Clearly, one only needs to check the commutator conditions on $D$ versus $D_T$.
However, the second commutator condition actually changes in form to:
\begin{equation} \label{h-timeinv}
(\nabla_\varphi h)\sigma _j
-(\nabla_\varphi \sigma _j)h=\sigma B_j,\mbox{ for all } 1\le j\le d,
\end{equation}
where $B(\varphi)=-A(\varphi,0)$.
Indeed, the left hand side of (\ref{h-timeinv}) does not depend on time
so the right side can not either.
\end{RM}

\begin{RM}
Theorem \ref{r1} also simplifies when $d=1$, which corresponds to appending a deterministic equation and
allowing time dependence to the case considered in \citet{Kouritzin:2000}.
In this $d=1$ case, (\ref{braket2}) is automatically true and (\ref{h}) becomes
\begin{equation} \label{hd1}
(\nabla_\varphi h)\sigma 
=(\nabla_\varphi \sigma )h+\partial_t\sigma-\sigma A.
\end{equation}
\end{RM}
Often, we are interested in establishing the representation for a given
stochastic differential equation.
In this case, the commutator conditions can be used quickly to determine if such
a representation is possible.
The easiest way to ensure (\ref{braket2}) is to have each column a constant multiple of another
$\sigma_j=c_j\sigma_1$ for all $j$ say.
However, there are other possibilities.
\begin{ex}
Let $p=d=2$ and $D\subset\mathbb R$ be a domain.
Suppose $a,e,f,g,m,n$ are $C^2(D)$-functions and our Fisk-Stratonovich equation has time-invariant coefficients:
\begin{equation}
h(\varphi_1,\varphi_2)=\left(\begin{matrix}f\left(\varphi_{1}\right)g\left(\varphi_{2}\right)\\m\left(\varphi_{1}\right)n\left(\varphi_{2}\right)\end{matrix}\right), \ 
\sigma \left(\varphi_{1},\varphi_{2}\right)=
\left(\begin{matrix}a\left(\varphi_{1}\right)&0\\e\left(\varphi_{2}\right)&e\left(\varphi_{2}\right)\end{matrix}\right).
\end{equation}
Moreover, suppose $a(\varphi_1)$ and $e(\varphi_2)$ are never $0$.
Then, $\sigma$ is always non-singular and it follows by (\ref{grad}) 
as well as the mean value theorem that for any $u\in[s,t]$
\[
\phi^{x,s}(y,u)-\phi^{x,s}(\hat y,u)=\sigma(\phi^{x,s}( y^*,u))
U^{-1}_{s,u}\phi^{x,s}\cdot(y-\hat y)
\]
with $y^*\in\mathcal N^{x,s}$ for $y,\hat y\in\mathcal N^{x,s}$ and any possible representation $\phi^{x,s},U_{s,t}$.
Hence, $\phi^{x,s}(y,u)=\phi^{x,s}(\hat y,u)\leftrightarrow y=\hat y$.
Therefore, it follows from (\ref{Urestrict}) that 
$U_{s,u}$ can not depend upon $\phi^{x,s}( y,u)$ 
for any $u\in[s,t]$ and 
$B$ in (\ref{h-timeinv}) is constant by (\ref{SecondA}).
Now,
\begin{equation}
\nabla_\varphi h =\left(\begin{matrix}f'(\varphi_1)g(\varphi_2)&f(\varphi_1)g'(\varphi_2)\\m'(\varphi_1)n(\varphi_2)&m(\varphi_1)n'(\varphi_2)\end{matrix}\right)
\end{equation}
and
\begin{equation}
\nabla_\varphi\sigma_2 =\left(\begin{matrix}0&0\\0&e'\left(\varphi_{2}\right)\end{matrix}\right),\ \nabla_\varphi\sigma_1 =\left(\begin{matrix}a'(\varphi_1)&0\\0&e'\left(\varphi_{2}\right)\end{matrix}\right)
\end{equation}
so the first commutator condition (\ref{braket2}) is fine since
\begin{equation}
\nabla_\varphi\sigma_1\sigma_2 =\left(\begin{matrix}0\\e'\left(\varphi_{2}\right)e(\varphi_2)\end{matrix}\right)=\nabla_\varphi\sigma_2 \sigma_1.
\end{equation}
Moreover,
\begin{equation}
\nabla_\varphi h\sigma_2-\nabla_\varphi \sigma_2 h =\left(\begin{matrix}e(\varphi_2)f(\varphi_1)g'(\varphi_2)\\m(\varphi_1)(e(\varphi_2)n'(\varphi_2)-e'(\varphi_2)n(\varphi_2))\end{matrix}\right)
\end{equation}
and
\begin{equation}
\nabla_\varphi h\sigma_1-\nabla_\varphi \sigma_1 h =\left(\begin{matrix}af'g+efg'-a'fg\\am'n+emn'-e'mn\end{matrix}\right).
\end{equation}
On the other hand, denoting $B=\left(\begin{matrix}b_{11}&b_{12}\\b_{21}&b_{22}\end{matrix}\right)$, we have
\begin{equation}
\sigma B =\left(\begin{matrix}ab_{11}&ab_{12}\\eb_{11}+eb_{21}&eb_{12}+eb_{22}\end{matrix}\right).
\end{equation}
Hence, by (\ref{h-timeinv}) there is an explicit solution if and only if
\begin{equation}
\!\!\!\left(\begin{matrix}af'g+efg'-a'fg&efg'\\am'n+emn'-e'mn&m(en'-e'n)\end{matrix}\right)=\left(\begin{matrix}ab_{11}&ab_{12}\\eb_{11}+eb_{21}&eb_{12}+eb_{22}\end{matrix}\right)
\end{equation}
for constants $b_{11},b_{12},b_{21},b_{22}$.
If $f=c_1a$, $n=c_2e$, $eg'=c_3$ and $m'a=c_4$ for some constants $c_1,c_2,c_3,c_4$, then it is easy to show that this condition
is met with $b_{22}=-c_1c_3$, $b_{21}=c_2c_4-c_1c_3$ and $b_{11}=b_{12}=c_1c_3$ so the representation holds for
\begin{equation}
h(\varphi_1,\varphi_2)=\left(\begin{matrix}\alpha\frac{g\left(\varphi_{2}\right)}{m'\left(\varphi_{1}\right)}\\\beta \frac{m\left(\varphi_{1}\right)}{g'\left(\varphi_{2}\right)}\end{matrix}\right), \ 
\sigma \left(\varphi_{1},\varphi_{2}\right)=\left(\begin{matrix}\frac{\gamma}{m'\left(\varphi_{1}\right)}&0\\\frac{\delta}{g'\left(\varphi_{2}\right)}&\frac{\delta}{g'\left(\varphi_{2}\right)}\end{matrix}\right),
\end{equation}
where $\alpha=c_1c_4,\beta=c_2c_3,\gamma=c_4,\delta=c_3$ are any constants and $g,m$ are $C^2$-functions with $\frac{1}{m'\left(\varphi_{1}\right)},\frac{1}{g'\left(\varphi_{1}\right)}\in C^1(D)$.
\end{ex}
\begin{ex}
In a similar manner, it follows that
\begin{equation}
h(\varphi_1,\varphi_2)=\left(\begin{matrix}\alpha\frac{g\left(\varphi_{2}\right)}{m'\left(\varphi_{1}\right)}\\\beta \frac{m\left(\varphi_{1}\right)}{g'\left(\varphi_{2}\right)}\end{matrix}\right), \ 
\sigma \left(\varphi_{1},\varphi_{2}\right)=\left(\begin{matrix}\frac{\gamma}{m'\left(\varphi_{1}\right)}&0\\0&\frac{\delta}{g'\left(\varphi_{2}\right)}\end{matrix}\right),
\end{equation}
for any constants $\alpha,\beta,\gamma,\delta$, also has a representation.
\end{ex}
There was significant work done in the previous examples and we still did not have
the representation functions.
The next example is the key to solving for complete representations and will be used in the following section.
\begin{ex}\label{CoreEG}
Suppose
$\sigma(\varphi,t) = \left(\begin{array}{cc} I_r & \overline \kappa(\varphi,t)\\
0 & 0 \end{array}\right) \in \mathbb R^{p\times d}$ satisfies (\ref{braket2}).
We will find the possible $h,b$ satisfying (\ref{h}) and the corresponding representations $U_{s,t},\phi^{x,s}$ by Theorem \ref{r1}.\\
{\bf Notation:}
As always, $\varphi$ is a variable and $\phi$ is the representation function.
Further, let $\overline{x}=(x_1,...,x_r)$, $\widetilde{x}=(x_{r+1},...,x_d)$,
$\overline{\varphi}=(\varphi_1,...,\varphi_r)$, $\widetilde{\varphi}=(\varphi_{r+1},...,\varphi_d)$,
$\widetilde D = \{\widetilde \varphi:(\overline \varphi,\widetilde \varphi) \in D\ \text{for some}\ \overline \varphi\}$,
$\widetilde D_T =\widetilde D \times[0,T)$,
\begin{equation}
\phi^{x,s}(y,t)=\left(\begin{array}{c}\overline\phi^{x,s}(y,t)\\\widetilde\phi^{x,s}(y,t)\end{array}\right),\ h=\left(\begin{array}{c}\overline h\\\widetilde h\end{array}\right)\ \text{and}\ A=\left(\begin{array}{cc}A_{11}&A_{12}\\A_{21}&A_{22}\end{array}\right),
\end{equation}
where $A_{11}\in\mathbb R^{r\times r}$.
Finally, we let
\begin{equation}\label{betadef}
\overline \beta(\varphi,t)=-A_{11}(\varphi,t)-\overline \kappa(\varphi,t)A_{21}(\varphi,t),
\end{equation}
which will appear often below.\\
{\bf Step 1:} Interpret (\ref{grad}) and $\mathcal C_2$ condition (\ref{Urestrict}) on $U_{s,t},A$.\\
Suppose $u\in[s,t]$. 
By (\ref{grad}) as well as the mean value theorem 
\begin{equation}
\left(\!\begin{array}{c}\overline\phi^{x,s}(y,u)-\overline\phi^{x,s}(\hat y,u)\\\widetilde\phi^{x,s}(y,u)-\widetilde\phi^{x,s}(\hat y,u)\end{array}\!\right)=\left(\!\begin{array}{cc} I_r & \overline \kappa(\phi^{x,s}(y^*,u),u)\\
0 & 0 \end{array}\!\right)
U^{-1}_{s,u}\phi^{x,s}\cdot(y-\hat y)
\end{equation}
with $y^*\in\mathcal N^{x,s}$ for $y,\hat y\in\mathcal N^{x,s}$ and any possible representation $\phi^{x,s}$.
Hence, $\overline\phi^{x,s}(y,u)\neq\overline\phi^{x,s}(\hat y,u)$ implies $y\neq\hat y$.
Therefore, it follows from (\ref{Urestrict}) that 
$U_{s,t}\phi^{x,s}$ can not depend upon $\overline\phi^{x,s}( y,u)$
for any $u\in[s,t]$, 
which implies $U_{s,t}\phi\doteq U_{s,t}\widetilde \phi$ only depends on $\widetilde \phi,t$.
This also means by (\ref{SecondA}) that 
\begin{equation}\label{FirstA}
A(\varphi,t)=\frac{d}{dt}  U_{u,t}\widetilde \phi^{\varphi,u}\big|_{u=t}.
\end{equation}
{\bf Step 2:} Interpret commutator conditions on $\overline \kappa,h$.\\
Let $e_i$ denote the $i^{th}$ column of $I_{p}$ so $\sigma_i=e_i$ for $i\le r$.
We have by (\ref{braket2}), that
\begin{equation}
\left(\begin{array}{c}\nabla_\varphi\overline \kappa_{j-r}\\0\end{array}\right)e_i=0\ \ \forall i\in\{1,2,...,r\},j\in{r+1,...,d},
\end{equation}
which establishes that $\overline \kappa(\widetilde \varphi,t)$ can only depend upon $\widetilde \varphi,t$.
This is the only restriction on $\overline \kappa$ from (\ref{braket2}).
By (\ref{h}), we find
\begin{equation}
\nabla_\varphi\left(\begin{array}{c}\overline h\\\widetilde h\end{array}\right)\sigma_j-\nabla_\varphi\sigma_j\left(\begin{array}{c}\overline h\\\widetilde h\end{array}\right) =\left(\begin{array}{cc}\overline \beta&\partial_t\overline \kappa-A_{12}-\overline \kappa A_{22}\\0&0\end{array}\right)_j
\end{equation}
so $\nabla_{\overline \varphi} \widetilde h=0$, implying $\widetilde h(\varphi)\in C^1(\widetilde D_T,\mathbb R^{p-r})$ only depends upon $\widetilde \varphi,t$, and
\begin{equation}\label{SpecCom1}
\nabla_{\overline{\varphi}} \overline h=\overline \beta,
\end{equation}
\begin{equation}\label{SpecCom2}
\overline \beta\, \overline \kappa=[(\nabla_{\widetilde \varphi}\overline \kappa_1)\widetilde h,...,(\nabla_{\widetilde \varphi}\overline \kappa_{d-r})\widetilde h] +\partial_t\overline \kappa-A_{12}-\overline \kappa A_{22}.
\end{equation}
Hence, it follows from (\ref{grad},\ref{dtphi},\ref{initphi}) that $\widetilde\phi^{x,s}$ satisfies
\begin{eqnarray}
\nabla _y\widetilde\phi^{x,s}(y,t)& = & 0, \label{vargrad} \\
\partial _t\widetilde\phi^{x,s}(y,t) & = & \widetilde h(\widetilde\phi^{x,s}(y,t),t), \label{dtivarphi}\\
\widetilde\phi ^{x,s}(0,s) & = & \widetilde x, \label{initvarphi}
\end{eqnarray}
which implies that $\widetilde \phi$ does not depend upon $\overline \phi$ nor $y$.
Moreover, by (\ref{FirstA}) and (\ref{betadef}), we conclude that $A(\varphi,t)\doteq A(\widetilde \varphi,t)$
and $\overline \beta(\varphi,t)\doteq\overline \beta(\widetilde \varphi,t)$ only depend on $\widetilde \varphi,t$.\\
{\bf Step 3:} Determine possible $h,b$.\\
By (\ref{SpecCom1}), we find
\begin{equation}\label{h1}
\overline h(\overline \varphi,\widetilde \varphi,t)= \overline \beta(\widetilde \varphi,t)\overline \varphi+\overline \theta(\widetilde \varphi,t)
\end{equation}
for some $C^1$-function $\overline \theta$.
Hence, the possible $h(\overline \varphi,\widetilde \varphi,t)=\left(\begin{array}{c}\overline h(\overline \varphi,\widetilde \varphi,t)\\\widetilde h(\widetilde \varphi,t)\end{array}\right)$ are:
\begin{equation}\label{possibleh}
\begin{array}{c}\widetilde h\in C^1(\widetilde D_T,\mathbb R^{p-r}),\\ 
\overline h\in \left\{\overline \theta(\widetilde \varphi,t)+
\overline \beta(\widetilde \varphi,t)\overline \varphi:\overline \beta\in C^1(\widetilde D_T,\mathbb R^{r\times r});\overline \theta\in C^1(\widetilde D_T,\mathbb R^r)\!\right\}
\end{array}.
\end{equation}
From (\ref{bh}) and fact $\overline \kappa(\widetilde \varphi,t)$ only depends on $\widetilde \varphi,t$, we find that
\begin{equation}
b=h+{\frac 12}\sum_{j=1}^d\{\nabla_\varphi \sigma _j\}\sigma _j=h.
\end{equation}
{\bf Free Parameters:} $A_{21}$, $A_{22}$, $\overline \kappa$, $\overline \beta$, $\overline \theta$ and $\widetilde h$ can be anything (subject to dimensionality and dependency on only $\widetilde \varphi,t$).
$A_{12}$ is then determined by (\ref{SpecCom2})
and $A_{11}$ by (\ref{betadef}).
$\overline \beta$ and $\overline \theta$ also determine the possible $\overline h$ above and $\phi^{x,s}$ below.
Different choices of $\overline \kappa$, $\overline \beta$, $\overline \theta$ and 
$\widetilde h$ will result in different solutions.
However, there is no loss in generality in taking $A_{21},A_{22}$ to
be zero.\\
{\bf Step 4:} Interpret differential system for $\phi^{x,s}$.\\
Since $\phi^{x,s}=\left(\begin{array}{c}\overline \phi\\\widetilde \phi\end{array}\right)$ satisfies (\ref{dtphi},\ref{initphi}), $\widetilde \phi$ must be of the form
\begin{equation}\label{phi2evolve}
\partial_t\widetilde \phi = \widetilde h(\widetilde \phi,t),\ \ \text{s.t.}\ \widetilde \phi(s)=\widetilde x.
\end{equation}
We let $\widetilde X_t$ denote the solution of this differential equation.
Next, since $\phi$ satisfies (\ref{grad}), $\overline \phi$ must be of the form
\begin{equation}\label{phiphi}
\overline \phi^{x,s}(y,t) = \overline c(t) + \left[ I_r \ \ \overline \kappa( \widetilde X_t,t)
\right]U^{-1}_{s,t}\widetilde \phi^{x,s}y,
\end{equation}
for some $\overline c \in C^1([0,T);\mathbb R^r)$.
Differentiating in $t$, noting by (\ref{FirstA}) (with $\widetilde \varphi=\widetilde X_t$) that
\begin{equation}\label{AAsim}
A(\widetilde X_t,t)=
\frac{d}{dt} U_{u,t}\widetilde \phi^{\widetilde X_u,u}\big|_{u=t},
\end{equation}
and using (\ref{phiphi},\ref{AAsim},\ref{Ugroup},\ref{phi2evolve},\ref{SpecCom2},\ref{betadef}), one has (with $U^{-1}_{s,t}=U^{-1}_{s,t}\widetilde \phi^{x,s}$) that
\begin{eqnarray}\label{partialtphi}
\!\!&\!\!\!&\!\!\!\partial_t \overline \phi(y,t)\\\nonumber
&\!\!\!=&\!\!\!   \overline c' (t) -\left[ I \ \
\overline \kappa( \widetilde X_t,t)\right]A(\widetilde X_t,t) U^{-1}_{s,t} y
\\
&\!\!\!+&\!\!\!\left[ 0 \ \partial_t\overline \kappa( \widetilde X_t,t)+\nabla_{\widetilde \varphi}\overline \kappa_1(\widetilde X_t,t)\, \widetilde h(\widetilde X_t,t),...,\nabla_{\widetilde \varphi}\overline \kappa_{d-r}(\widetilde X_t,t)\, \widetilde h(\widetilde X_t,t)
\right]U^{-1}_{s,t} y\ \ \nonumber\\
&\!\!\!=&\!\!\!\overline c' (t) +\overline \beta(\widetilde X_t,t)[I\ \ \overline \kappa(\widetilde X_t,t)]U^{-1}_{s,t} y\nonumber\\
&\!\!\!=&\!\!\!\overline c' (t) +\overline \beta(\widetilde X_t,t)(\overline \phi(y,t)-\overline c(t))\nonumber.
\end{eqnarray}
On the other hand, by (\ref{dtphi}) and (\ref{h1})
\begin{equation}\label{phipartialt}
\partial_t \overline \phi(y,t)= \overline \theta(\widetilde X_t,t)+\overline \beta(\widetilde X_t,t)\overline \phi(y,t).
\end{equation}
Comparing (\ref{partialtphi}) and (\ref{phipartialt}), one has that
\begin{equation}\label{ceqn}
\overline c'(t)=\overline \theta(\widetilde X_t,t)+\overline \beta(\widetilde X_t,t)\overline c(t)\ \ \text{subject to }\overline c(s)=\overline{x}.
\end{equation}
{\bf Step 5:} Determine $U$ in terms of $\overline \kappa$, $\overline \beta$ and $\theta$.\\
We just need $A$ to satisfy (\ref{betadef},\ref{SpecCom2}) so there is no loss of generality in taking
\begin{equation}\label{Aderived}
\!\!\!\!\!\!\left(\!\!\begin{array}{cc}A_{11}&A_{12}\\A_{21}&A_{22}\end{array}\!\!\right)(\widetilde\varphi,t)
=\left(\!\begin{array}{cc} -\overline \beta & {
[(\nabla_{\widetilde\varphi}\overline \kappa_1)\widetilde h,...,(\nabla_{\widetilde\varphi}\overline \kappa_{d-r})\widetilde h]+\partial_t\overline \kappa
-\overline \beta\,\overline \kappa
} \\0& 0 \end{array}\!\right)(\widetilde\varphi,t).
\end{equation}
By (\ref{AAsim}), (\ref{Ugroup}) and (\ref{Aderived}), we know
\begin{eqnarray}\label{MatUdef}
\!\!\partial_tU_{s,t}\widetilde X&\!\!\!=&\!\!\!(U_{s,t}\widetilde X)A(\widetilde X_t,t)\\
&\!\!\!=&\!\!\!U_{s,t}\widetilde X\!\nonumber
\left(\!\!\begin{array}{cc} -\overline \beta & \{[(\nabla_{\widetilde \phi}\overline \kappa_1)\widetilde h,...,(\nabla_{\widetilde \phi}\overline \kappa_{d-r})\widetilde h]
+\partial_t\overline \kappa-\overline \beta\,\overline \kappa\}\! \\0 & 0 \end{array}\!\!\right)\!(\widetilde X_t,t)
\end{eqnarray}
subject to $U_{s,s}\widetilde X=U_{s,s}\widetilde x=I_d$.
Now, suppose that $T_{u,t}$ is the two parameter semigroup:
\begin{equation}
\frac{d}{dt}T_{u,t}=-T_{u,t}\,\overline \beta(\widetilde X_t,t),\ \ \forall t\ge u\ \ \text{subject to }T_{u,u}=I_r.
\end{equation}
Then, the solution of (\ref{MatUdef}) is
\begin{equation}
\!\!\!U_{s,t}\widetilde X=
\left(\!\begin{array}{cc} T_{s,t} & T_{s,t}\overline \kappa(\widetilde X_t,t)-\overline \kappa(\widetilde X_s,s) \\0 & I_{d-r} \end{array}\!\right),
\end{equation}
and so
\begin{equation}
U_{s,t}^{-1}\widetilde X=
\left(\!\begin{array}{cc} T^{-1}_{s,t} & T^{-1}_{s,t}\overline \kappa(\widetilde X_s,s)-\overline \kappa(\widetilde X_t,t) \\0 & I_{d-r} \end{array}\!\right).
\end{equation}
Moreover, it follows by (\ref{ceqn}) that $\overline c$ can also be expressed in
terms of $T_{s,t}^{-1}$.
{\bf Step 6:} Solution Algorithm.\\
\begin{enumerate}
\item[a:]
Check $\overline \kappa$ only depends upon $\widetilde \varphi,t$.
This must be true by Step 2.
\item[b:]
Choose any functions $\overline \beta\in C^1(\widetilde D_T,\mathbb R^{r\times r});\overline \theta\in C^1(\widetilde D_T,\mathbb R^r)$ and $\widetilde h\in C^1(\widetilde D_T,\mathbb R^{p-r})$ for drift of the form
$b(\overline \varphi,\widetilde \varphi,t)=h(\overline \varphi,\widetilde \varphi,t)=
\left(\begin{array}{c}\overline \theta(\widetilde \varphi,t)+\overline \beta(\widetilde \varphi,t)\overline \varphi\\\widetilde h(\widetilde \varphi,t)\end{array}\right)$.
These are the only possible drifts by Step 3.
\item[c:]
Solve
\begin{eqnarray*}
\widetilde X'_t&\!\! =&\!\!\widetilde h(\widetilde X_t,t)\ \ \text{subject to }\widetilde X_s = \widetilde x
\end{eqnarray*}
\item[d:]
Solve
\begin{equation}
\frac{d}{dt}T_{s,t}=-T_{s,t}\, \overline\beta(\widetilde X_t,t),\ \ \forall t\ge s\ \ \text{subject to }T_{s,s}=I_r.
\end{equation}
Then, set
\begin{eqnarray}
\!\!U_{s,t}\widetilde X&\!\!=&
\!\!\left(\!\begin{array}{cc} T_{s,t} & T_{s,t}\overline \kappa(\widetilde X_t,t)-\overline \kappa(\widetilde X_s,s) \\0 & I_{d-r} \end{array}\!\right),\\
\!\!U_{s,t}^{-1}\widetilde X&\!\!=&
\!\!\left(\!\begin{array}{cc} T^{-1}_{s,t} & T^{-1}_{s,t}\overline \kappa(\widetilde X_s,s)-\overline \kappa(\widetilde X_t,t) \\0 & I_{d-r} \end{array}\!\right),\\
\overline c(t) &\!\!=& T^{-1}_{s,t}\overline{x}
+ T^{-1}_{s,t} \int_s^tT_{s,u}\overline \theta(\widetilde X_u,u)du.	
\end{eqnarray}
\item[e:]
Divide $\phi=\left(
            \begin{array}{c} \overline \phi \\ \widetilde \phi \end{array}
           \right)$ and set
$ \widetilde \phi(t) =\widetilde X_t $,
\begin{eqnarray*}
	\overline \phi(y,t)&=&\overline c(t) + \left[ I_r \ \ \overline \kappa(\widetilde X_t,t)
\right](U^{-1}_{s,t}\widetilde X)y.
\end{eqnarray*}
\end{enumerate}
\end{ex}
The preceding example was intuitively pleasing:
We showed you could indeed represent \emph{linear} SDEs using a single Gaussian stochastic integral.
Further, we showed that we could \emph{append} an ordinary differential equation
($d\widetilde X_t = \widetilde h(\widetilde X_t)dt$) and use its solution
within the coefficients of the stochastic differential equation.
Finally, we showed how to construct the solution.
While none of this is surprising, it does explain our necessary and sufficient conditions.
In the next section, we will show how to combine this example with diffeomorphisms to handle the general case
with nonlinear coefficients.


\section{Construction Results and Examples} \label{appli}
When one explicit solution exists, there will be a whole class of
such solutions corresponding to distinct $b$'s. We now identify the
$b$'s, $\phi$'s and $U$'s for these solutions corresponding to a
given $\sigma$.
This is done by using local diffeomorphisms to convert the
general case to the case of Example \ref{CoreEG}.
The idea is based upon the following simple lemma.

\begin{lem}\label{dc}
Suppose $D \subset \mathbb R^p$ is a domain, $T>0$, $D_T=D\times[0,T)$,
$\widehat\Lambda\doteq\left(\begin{array}{c}\Lambda_t\\t\end{array}\right):
D_T\rightarrow\widehat\Lambda(D_T)\subset\mathbb R^{p+1}$ is a $C^2$-diffeomorphism 
and $\sigma, b, h, \{\phi^{x,s}\}_{(x,s)\in D_T}$, 
$\{U_{s,t}\phi^{x,s}\}_{(x,s)\in D_T,s\le t<T}$, $A$ satisfy
Conditions $\mathcal C_1, \mathcal C_2, \mathcal C_3$ as well as equations (\ref{bh},\ref{SecondA}).
Let $\widehat D_T=\widehat\Lambda(D_T)$,
\begin{eqnarray*}
\widehat{\sigma} &=& \{(\nabla_\varphi\Lambda_t)\sigma\}\circ\widehat\Lambda^{-1},\ \ \widehat{h} = \{(\nabla_\varphi\Lambda_t)h\}\circ\widehat\Lambda^{-1},\\
\widehat{b} &=& \left\{(\nabla_\varphi\Lambda_t)b+\frac12\sum_{j=1}^d\sum_{i,k=1}^p
(\partial_{\varphi_i}\partial_{\varphi_k}\Lambda_t)\sigma_{i,j}\sigma_{k,j}\right\}\circ\widehat\Lambda^{-1},\ \
\\
\widehat{\phi}^{x,s}(y,t) &=& \Lambda_t\circ\phi^{\widehat\Lambda^{-1}(x,s)}(y,t),\\
\widehat{U}_{s,t}\widehat{\phi}^{x,s} &=& U_{s,t}\phi^{\widehat\Lambda^{-1}(x,s)},\\
\widehat{A}&=&A\circ\widehat\Lambda^{-1}.
\end{eqnarray*}
Then, $\widehat \sigma, \widehat b, \widehat h, \{\widehat \phi^{x,s}\}_{(x,s)\in \widehat D_T}, \widehat U, \widehat A$ satisfy
Conditions $\mathcal C_1, \mathcal C_2, \mathcal C_3$ as well as equations (\ref{bh},\ref{SecondA}) on $\widehat D_T$.
Moreover,
\begin{itemize}
\item[i)] $\widehat \phi,\widehat U$ is a $(x,s,\widehat \sigma,\widehat h)$-representation for each $(x,s)\in\widehat D_T$ if and only if $\phi,U$ is a $(x,s,\sigma,h)$-representation for each $(x,s)\in D_T$.
\item[ii)] (\ref{braket2}) holds if and only
\begin{equation}\label{braket2hat}
(\nabla_\varphi \widehat \sigma _k)\widehat \sigma _j=(\nabla_\varphi \widehat \sigma _j)\widehat \sigma _k,
\mbox{ on } \widehat D_T\mbox{ for all } j,k\in \{1,...,d\}.
\end{equation}
\item[iii)] (\ref{h}) holds if and only
\begin{equation} \label{hhat}
(\nabla_\varphi \widehat h)\widehat \sigma _j
=(\nabla_\varphi \widehat \sigma _j)\widehat h+\partial_t\widehat \sigma_j-\widehat \sigma\widehat  A_j,\mbox{ on } \widehat D_T\mbox{ for all } 1\le j\le d.
\end{equation}
\end{itemize}
\end{lem}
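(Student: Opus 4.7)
The plan is to verify each claim by a direct computation from the defining formulas, using the chain rule and the fact that $\widehat\Lambda(\varphi,t)=(\Lambda_t(\varphi),t)$ is a $C^2$-diffeomorphism (so that $\widehat\Lambda^{-1}(x,s)=(\Lambda_s^{-1}(x),s)$ and $M:=\nabla_\varphi\Lambda_t$ is invertible at every $t$). Preservation of $\mathcal C_1$--$\mathcal C_3$ and of (\ref{bh}),(\ref{SecondA}) are essentially ``relabelling'' statements; the delicate content is parts (ii) and (iii), where Hessian terms produced by the chain rule must cancel by symmetry of mixed partials of $\Lambda_t$.

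First I would verify the standing conditions. Regularity ($\mathcal C_1$) is immediate because composition with the $C^2$-map $\Lambda_t$ preserves the appropriate $C^{2,1}$ and $C^1$ smoothness. For $\mathcal C_2$, direct substitution yields $\widehat\phi^{x,s}(0,s)=\Lambda_s(\Lambda_s^{-1}(x))=x$ and $\widehat U_{s,s}\widehat\phi^{x,s}=U_{s,s}\phi^{\widehat\Lambda^{-1}(x,s)}=I_d$. Condition $\mathcal C_3$ transfers because $\widehat U$ is $U$ at a relabelled base point; (\ref{Urestrict}) is automatic, and (\ref{Ugroup}) follows by unwinding $\widehat\phi^{\widehat\phi^{x,s}(y_u,u),u}$ via bijectivity of $\widehat\Lambda$ and invoking the original semigroup identity. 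The relation (\ref{bh}) for the hatted objects follows upon expanding $\sum_j(\nabla_\varphi\widehat\sigma_j)\widehat\sigma_j$ by the chain rule: the ``leading'' piece is $M\sum_j(\nabla_\varphi\sigma_j)\sigma_j\circ\widehat\Lambda^{-1}$, while the Hessian correction produced by differentiating $M$ is exactly the $\frac12\sum_{j,i,k}(\partial_{\varphi_i}\partial_{\varphi_k}\Lambda_t)\sigma_{i,j}\sigma_{k,j}$ term in the definition of $\widehat b$. Finally, (\ref{SecondA}) for $\widehat A$ is immediate since $\widehat U_{s,t}\widehat\phi^{x,s}=U_{s,t}\phi^{\widehat\Lambda^{-1}(x,s)}$.

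For (i), I would differentiate $\widehat\phi^{x,s}(y,t)=\Lambda_t\circ\phi^{\widehat\Lambda^{-1}(x,s)}(y,t)$ in $y$ and $t$. The chain rule combined with (\ref{grad}) for $\phi$ yields $\nabla_y\widehat\phi=M(\phi)\,\sigma(\phi,t)\,U^{-1}_{s,t}\phi=\widehat\sigma(\widehat\phi,t)\,\widehat U^{-1}_{s,t}\widehat\phi$, which is (\ref{grad}) for $\widehat\phi$; the analogous calculation for $\partial_t\widehat\phi$, together with (\ref{dtphi}) for $\phi$, produces (\ref{dtphi}) for $\widehat\phi$ with the prescribed $\widehat h$. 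Invertibility of $M$ and $\widehat\Lambda$ supplies the reverse implication by running the same computation backwards.

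The heart of the proof is parts (ii) and (iii). With $\widehat\sigma_j=(M\sigma_j)\circ\widehat\Lambda^{-1}$, the chain rule gives $\nabla_{\hat\varphi}\widehat\sigma_j=\nabla_\varphi(M\sigma_j)\,M^{-1}$ at the preimage; right-multiplying by $\widehat\sigma_k=(M\sigma_k)\circ\widehat\Lambda^{-1}$ and antisymmetrizing in $(j,k)$, the Hessian terms $\sum_{l,m}(\partial^2_{\varphi_l\varphi_m}\Lambda_t)\sigma_{l,j}\sigma_{m,k}$ cancel by symmetry of mixed partials, leaving $M\cdot\bigl[(\nabla_\varphi\sigma_j)\sigma_k-(\nabla_\varphi\sigma_k)\sigma_j\bigr]\circ\widehat\Lambda^{-1}$, and (ii) follows from invertibility of $M$. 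An entirely parallel computation, replacing one $\sigma_k$ by $h$, gives $(\nabla_\varphi\widehat h)\widehat\sigma_j-(\nabla_\varphi\widehat\sigma_j)\widehat h=M\cdot[(\nabla_\varphi h)\sigma_j-(\nabla_\varphi\sigma_j)h]\circ\widehat\Lambda^{-1}$, and a matching bookkeeping check for $\partial_t\widehat\sigma_j-\widehat\sigma\widehat A_j=M\cdot[\partial_t\sigma_j-\sigma A_j]\circ\widehat\Lambda^{-1}$ combines with the original (\ref{h}) to yield (iii). The main obstacle is not conceptual but the bookkeeping itself: keeping track of which gradient is evaluated at which argument ($\varphi$ versus $\hat\varphi$) and invoking $\partial^2_{\varphi_l\varphi_m}\Lambda_t=\partial^2_{\varphi_m\varphi_l}\Lambda_t$ at precisely the right moment so that the unwanted Hessian terms cancel.
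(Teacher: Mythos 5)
Your treatment of part (ii) and of the preservation of $\mathcal C_1$--$\mathcal C_3$ and (\ref{bh}),(\ref{SecondA}) is sound and is the ``direct calculation'' the paper alludes to. The gap is in your handling of every place where a time derivative meets the time-dependent diffeomorphism, i.e.\ in part (i) and in part (iii). Writing $M=\nabla_\varphi\Lambda_t$, your ``matching bookkeeping check'' $\partial_t\widehat\sigma_j-\widehat\sigma\widehat A_j=M\bigl[\partial_t\sigma_j-\sigma A_j\bigr]\circ\widehat\Lambda^{-1}$ is false whenever $\Lambda_t$ genuinely depends on $t$: differentiating $\widehat\sigma_j(\hat\varphi,t)=M\bigl(\Lambda_t^{-1}(\hat\varphi),t\bigr)\,\sigma_j\bigl(\Lambda_t^{-1}(\hat\varphi),t\bigr)$ at fixed $\hat\varphi$ produces, besides $M\partial_t\sigma_j$, the two extra terms $(\partial_tM)\sigma_j$ and $\nabla_\varphi(M\sigma_j)\cdot\partial_t\bigl[\Lambda_t^{-1}(\hat\varphi)\bigr]=-\nabla_\varphi(M\sigma_j)M^{-1}\partial_t\Lambda_t$, and neither is matched by anything in $\widehat\sigma\widehat A_j=\{M\sigma A_j\}\circ\widehat\Lambda^{-1}$. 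These leftovers cancel only against the corresponding extra terms coming from a $\partial_t\Lambda_t$ contribution to $\widehat h$ (compare Theorem \ref{r2}, where $\widehat h=(\nabla_\varphi\Lambda_t h+\partial_t\Lambda_t)\circ\widehat\Lambda^{-1}$); the same omission occurs in your part (i), since the chain rule gives $\partial_t\widehat\phi=(\partial_t\Lambda_t)(\phi)+M(\phi)\,h(\phi,t)$, not $M(\phi)h(\phi,t)$ alone. A one-line test: $p=d=1$, $\sigma(\varphi)=\varphi$, $h=0$, $A=0$ (so (\ref{h}) holds), $\Lambda_t(\varphi)=\varphi+t$; then $\widehat\sigma(\hat\varphi,t)=\hat\varphi-t$ and $\partial_t\widehat\sigma=-1\neq 0=M\partial_t\sigma$, so your claimed identity, and with it your derivation of (iii), breaks down.

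This is precisely the point the paper's proof is engineered to avoid. It appends $t$ as a $(p+1)$-st state variable, so that the relevant Jacobian is the full $\nabla\widehat\Lambda=\bigl(\begin{smallmatrix}\nabla_\varphi\Lambda_t&\partial_t\Lambda_t\\0&1\end{smallmatrix}\bigr)$, the extended drift $(h^{\top},1)^{\top}$ transforms by this full Jacobian (automatically carrying the $\partial_t\Lambda_t$ column into $\widehat h$), and the equivalence of (\ref{h}) and (\ref{hhat}) reduces to exactly the antisymmetrization-of-Hessians argument you gave for part (ii), now in dimension $p+1$ and with no separate $\partial_t\sigma_j$ or $\partial_t\Lambda_t$ bookkeeping at all. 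To repair your direct route you must either carry the terms $(\partial_tM)\sigma_j-\nabla_\varphi(M\sigma_j)M^{-1}\partial_t\Lambda_t$ explicitly and exhibit their cancellation against the $\partial_t\Lambda_t$ part of $(\nabla_{\hat\varphi}\widehat h)\widehat\sigma_j-(\nabla_{\hat\varphi}\widehat\sigma_j)\widehat h$, or adopt the paper's time-appending device.
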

\begin{RM}
In the time-homogeneous case, we can deal with $B$ instead of $A$ and set	
$\widehat{B}=B\circ\Lambda^{-1}_0$.
\end{RM}
\begin{proof}
This lemma follows by direct calculation.
Perhaps, the fastest way to verify the commutator conditions is to think of (\ref{Strato}) as a time-homogeneous
equation
\[
d\left[\begin{array}{c}X_t\\t\end{array}\right]=\left[\begin{array}{c}h(X_t,t)\\1\end{array}\right]dt+\left[\begin{array}{c}\sigma (X_t,t)\\0\end{array}\right]\bullet dW_t,\qquad  \left[\begin{array}{c}X_s\\s\end{array}\right]=\left[\begin{array}{c}x\\s\end{array}\right]
\]
on $[s,\tau^{x,s})$, by appending the trivial equation $t=t$ and thinking of $t$
as an additional state variable.
Then, verifying (\ref{h}) is equivalent to (\ref{hhat}) is the same as
verfying
\begin{eqnarray*} 
&\!\!&\!\!\left(\nabla\left[\begin{array}{c}h\\1\end{array}\right]\right)\left[\begin{array}{c}\sigma_j \\0\end{array}\right]
=\left(\nabla\left[\begin{array}{c}\sigma_j \\0\end{array}\right]\right)\left[\begin{array}{c}h\\1\end{array}\right]-\left[\begin{array}{c}\sigma \\0\end{array}\right]A_j\\ 
	&\!\!\leftrightarrow&\!\!
\left(\nabla\left[\begin{array}{c}\widehat h\\1\end{array}\right]\right)\left[\begin{array}{c}\widehat \sigma_j \\0\end{array}\right]
=\left(\nabla\left[\begin{array}{c}\widehat \sigma_j \\0\end{array}\right]\right)\left[\begin{array}{c}\widehat h\\1\end{array}\right]-\left[\begin{array}{c}\widehat \sigma \\0\end{array}\right]\widehat  A_j,
\end{eqnarray*}
which avoids $\partial_t\sigma_j$ and $\Lambda_t$ if we express $(\widehat h^T,1)^T$ and
$(\widehat \sigma_j^T,0)^T$ in terms of $\widehat\Lambda$.
\end{proof}

The idea behind this lemma is that with some diffeomorphism $\widehat{\sigma}=
\left(\begin{array}{cc} I_r & \overline \kappa\\ 0 & 0 \end{array}\right)$
so we can use Example \ref{CoreEG} to solve for the possible $\hat h$
and the representations $\widehat{\phi}^{x,s}$, $\widehat{U}^{x,s}$.
Unfortunately, it is sometimes impossible to have a single diffeomorphism
for all of $D_T$ and, even when it is possible, we may not know that until
after local diffeomorphisms are constructed and one of them is extendable to all
of $D_T$.

\begin{defn}
Suppose $(x,s)\in D_T$. Then, an $({x},s)$-local
diffeomorphism $(O^{x,s},\widehat{\Lambda}^{x,s})$ is a bijection
$\widehat{\Lambda}^{x,s}:O^{x,s}\rightarrow
\widehat{\Lambda}^{x,s}(O^{x,s})$ such that $\widehat{\Lambda}^{x,s}\in
C^2(O^{x,s};\mathbb R^{p+1})$, where $O^{x,s}\subset D_T$ is a
(relatively open) neighbourhood of $x,s$. 
We define $\n \widehat{\Lambda}^{-1}(\widehat\Lambda(\varphi,t))$ to be
$\left[\nabla\widehat\Lambda (\varphi,t)\right]^{-1}$ for $(\varphi,t)\in O^{
x,s}$.
\end{defn}
We imposed sufficient differentiability on our local diffeomorphisms for
our uses to follow.
Our $(x,s)$-local diffeomorphisms will take the form 
$\widehat\Lambda=\left(\begin{array}{c}\Lambda_t\\t\end{array}\right)$ 
with $\Lambda_t$ being constructed from $\sigma$ under the conditions:
\begin{itemize}
\item[$D$: ]
Let $D\subset\mathbb R^p$ be a bounded convex domain, $T>0$ and 
$D_T=D\times[0,T)$.
\item[$\partial_1$: ]
$\sigma\in C^{ 1}(D_T;{\mathbb R}^{p\times d})$.
\item[$H_r$: ]
The rank of $\sigma$ is $r$ on $D_T$ with the first $r$ rows having full
row rank.
\item[$B$: ] 
$(\nabla_\varphi \sigma_j)\sigma_k -(\nabla_\varphi \sigma_k)\sigma_j =0 $ on $D_T$,
for $1\le j,k \le d$ and $(x,s)\in D_T$.
\end{itemize}
To ensure the row rank part of $H_r$, we can just permute the rows of (\ref{VSDE1}),
amounting to relabeling the $\{X^i_t\}_{i=1}^p$.
\begin{prop}\label{diff1}
Suppose [$D$, $\partial_1$, $H_r$, $B$] hold.
Then,
there exists an $(x,s)$-local diffeomorphism $(O^{x,s},\widehat\Lambda^{x,s})$
and a constant permutation matrix $\pi$ such that
$$
\widehat \sigma \doteq \{(\nabla_\varphi \Lambda_t)\sigma \pi\}\circ \widehat\Lambda ^{-1}=
\left(\begin{array}{cc} I_r & \overline \kappa\\ 0 & 0 \end{array}\right) \in \mathbb R^{p\times d}  \mbox{ on } \widehat\Lambda(O^{x,s}),
$$
where $\overline \kappa \in C^1(\widehat\Lambda(O^{x,s});{\mathbb R}^{r\times (d-r)})$
does not depend on $\varphi_1, \ldots , \varphi_r$.
\end{prop}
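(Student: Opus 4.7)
The plan is to construct a time-dependent local diffeomorphism via commuting flows of the columns of $\sigma$, which is the classical Frobenius-type construction adapted here to a time-parameter and a distribution of dimension smaller than the number of columns. I would first reduce to a setting where I pick out an $r\times r$ invertible block, then straighten the chosen columns, and finally read off $\overline\kappa$ together with its independence from the straightened coordinates.

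First, by $H_r$ the matrix $\sigma(x,s)$ has rank $r$ and its top $r$ rows are linearly independent. By continuity, some choice of $r$ columns produces an invertible top $r\times r$ submatrix on a neighbourhood of $(x,s)$; let $\pi$ be the (constant) column permutation that places these columns first, and write $\sigma\pi = [\sigma^{(1)}\mid \sigma^{(2)}]$ with $\sigma^{(1)}\in\mathbb{R}^{p\times r}$. Since $\sigma$ has rank $r$ throughout and $\sigma^{(1)}$ has full column rank, each column of $\sigma^{(2)}$ is a $C^1$ linear combination
\[
\sigma^{(2)}_j(\varphi,t) = \sum_{i=1}^r c_{ij}(\varphi,t)\,\sigma^{(1)}_i(\varphi,t).
\]
Denote $v_i = \sigma^{(1)}_i$ and $w_j = \sigma^{(2)}_j$, viewed as time-dependent vector fields on $\mathbb{R}^p$. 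Condition $B$ gives $(\nabla_\varphi v_i)v_k = (\nabla_\varphi v_k)v_i$ and $(\nabla_\varphi v_i)w_j = (\nabla_\varphi w_j)v_i$, i.e., the Lie brackets $[v_i,v_k]$ and $[v_i,w_j]$ all vanish at every frozen time $t$.

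Second, for each $t$, let $\Phi^{v_i,t}_\tau$ denote the time-$\tau$ flow of the frozen vector field $v_i(\cdot,t)$. Define, for small $y=(y_1,\ldots,y_r)$ and $\tilde z=(\tilde z_1,\ldots,\tilde z_{p-r})$,
\[
\Psi_t(y,\tilde z) \;=\; \Phi^{v_r,t}_{y_r}\circ\cdots\circ\Phi^{v_1,t}_{y_1}\bigl(x+(0,\ldots,0,\tilde z_1,\ldots,\tilde z_{p-r})\bigr).
\]
Because $[v_i,v_k]=0$, the flows commute, so the order of composition is irrelevant and differentiating gives $\partial_{y_i}\Psi_t = v_i(\Psi_t,t)$ on the whole chart (not just at $0$). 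The Jacobian of $\Psi_t$ at $(0,0)$ is the $p\times p$ matrix $[v_1(x,t)\,\cdots\,v_r(x,t)\mid e_{r+1}\,\cdots\,e_p]$, whose determinant equals (up to sign) that of the invertible top $r\times r$ block of $\sigma^{(1)}(x,t)$. So $\Psi_t$ is a local diffeomorphism on a neighbourhood $O^{x,s}$ of $(x,s)$, and I set $\Lambda_t=\Psi_t^{-1}$, $\widehat\Lambda^{x,s}=(\Lambda_t,t)$. By construction, in the straightened coordinates each $v_i$ becomes $\partial_{y_i}$, i.e.\ $(\nabla_\varphi\Lambda_t)v_i = e_i$.

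Third, I would read off $\widehat\sigma$. The first $r$ columns of $\widehat\sigma$ are the $e_i$, giving the block $\binom{I_r}{0}$. For the remaining columns, $(\nabla_\varphi\Lambda_t)w_j = \sum_i c_{ij}e_i$, whose lower $p-r$ entries vanish; collect the top entries into $\overline\kappa\in\mathbb{R}^{r\times(d-r)}$. Independence of $\overline\kappa$ on the first $r$ coordinates follows from the remaining part of $B$: diffeomorphisms preserve Lie brackets at fixed $t$, so $[v_i,w_j]=0$ becomes $[\partial_{y_i},\widehat w_j]=0$ in the new chart, which says $\partial_{y_i}\widehat w_j=0$ for $i\le r$. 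Hence $\overline\kappa$ depends only on the last $p-r$ coordinates and on $t$, as required.

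The main obstacle I expect is the careful bookkeeping in the commuting-flow step: verifying that $\partial_{y_i}\Psi_t=v_i(\Psi_t,t)$ identically (not merely at the origin) requires an explicit use of flow commutativity for each index $i$, and the $t$-dependence of the flows must be tracked since the vector fields $v_i(\cdot,t)$ vary with $t$. A secondary concern is regularity: under $\partial_1$ the vector fields are only $C^1$, so their flows are $C^1$ in the initial condition, whereas the local diffeomorphism definition asks for $C^2$; reconciling this either needs a standard smoothing argument or a small strengthening of the hypothesis, but it does not alter the structural core of the argument sketched above.
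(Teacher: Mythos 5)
Your argument is correct and runs on the same engine as the paper's proof --- straightening the columns of $\sigma\pi$ by their flows, with condition $B$ making the straightened picture consistent --- but the execution differs in three ways worth recording. First, the paper straightens one column at a time: it builds $\Lambda=\Lambda^r\circ\cdots\circ\Lambda^1$ inductively, where $\Lambda^i$ inverts the flow map of the single, already partially transformed column $\alpha_i=\{\nabla\Lambda^{i-1,1}\sigma^\pi_i\}\circ(\Lambda^{i-1,1})^{-1}$, and it uses $B$ only to show that after columns $1,\dots,i$ are straightened the remaining columns no longer depend on $x_1,\dots,x_i$, so the induction can proceed. You instead build the simultaneous flow-box $\Psi_t$ in one shot and therefore need the full commuting-flows statement to get $\partial_{y_i}\Psi_t=v_i(\Psi_t,t)$ for every $i$, not merely the outermost one; that is the classical Frobenius computation and is fine, but it is the step where your route leans hardest on the bracket hypothesis. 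Second, the paper absorbs time as an extra state coordinate (the $(q,D^2_T)$ device, with the appended row of $\sigma$ set to zero), so a single diffeomorphism of $\mathbb R^{p+1}$ comes out and joint regularity in $(\varphi,t)$ is automatic; you freeze $t$ and flow the frozen fields, which works but obliges you to check that the neighbourhood on which $\Psi_t$ is invertible can be taken uniform in $t$ and that $(\varphi,t)\mapsto\Lambda_t(\varphi)$ is jointly regular --- both follow from smooth dependence of flows on parameters, but should be stated. Third, you force the bottom $(p-r)\times(d-r)$ block to vanish up front by writing $\sigma^{(2)}_j$ as a pointwise $C^1$ combination of $\sigma^{(1)}_1,\dots,\sigma^{(1)}_r$ via $H_r$, whereas the paper reads that block off at the very end from the fact that $\widehat\sigma$ still has rank $r$. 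The $C^1$-versus-$C^2$ regularity issue you flag is real, but it is equally present in the paper's own proof (which likewise asserts $\Lambda^i\in C^2$ for the inverse of the flow of a merely $C^1$ field), so it is not a defect peculiar to your route.
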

\begin{proof}
Provided in the Appendix.
\end{proof}
\begin{RM}
The permutation matrix $\pi$ permutes the columns of $\sigma$.
We label the permuted diffusion coefficient $\sigma^\pi=\sigma\pi$ and note that
\[
	dX_t=b(X_t)dt+\sigma(X_t)dW_t=b(X_t)dt+\sigma^\pi(X_t)dW^\pi_t,
\]
where $W^\pi=\pi^{-1}W$ is a permutation of the Brownian motions $W$.
Also, the Stratonovich drift $h$ remains the same by (\ref{bh}).
\end{RM}
\begin{RM}\label{diffform}
It follows from the proof in the Appendix that the diffeomorphism can have the form 
$\widehat\Lambda=\widehat\Lambda_r\circ\cdots\circ\widehat\Lambda_2\circ\widehat\Lambda_1$ for any diffeomorphisms
$\widehat\Lambda_i:\widehat\Lambda_{i-1}\circ\cdots\circ\widehat\Lambda_2\circ\widehat\Lambda_1(D_T)\rightarrow \mathbb R^{p+1}$ satisfying 
$\{\nabla\widehat\Lambda_{i}\cdots \nabla\widehat\Lambda_2\nabla \widehat\Lambda_1\sigma^\pi_i\}\circ\widehat\Lambda_1^{-1}\circ\widehat\Lambda_2^{-1}\circ\cdots\circ\widehat\Lambda_i^{-1}=e_i$,
where $(e_1\, e_2\,\ldots\,e_p\,e_{p+1})=I_{p+1}$ is the identity matrix. 
However, as will be seen below in Remark \ref{Remark7}, this does not
uniquely define the diffeomorphism.
\end{RM}

Proposition \ref{diff1} immediately provides us our second main theorem.


\begin{thm}\label{r2}
Suppose [$D$, $\partial_1$, $H_r$, $B$] hold, $h\in C^1(D_T;{\mathbb R}^p)$, $(x,s)\in D_T$ and 
$W$ is an $\mathbb R^d$-valued standard Brownian motion.
Then, there exists a stopping time $\tau>s$, a permutation matrix $\pi$ and an $(x,s)$-local diffeomorphism $(O^{x,s},\widehat \Lambda^{x,s})$,
as in Proposition \ref{diff1} and Remark \ref{diffform}, such that
$$
\mbox{i)  }\ \widehat \sigma \doteq \{(\nabla_\varphi \Lambda_t)\sigma^\pi\}\circ \widehat\Lambda ^{-1}=
\left(\begin{array}{cc} I_r & \overline \kappa\\ 0 & 0 \end{array}\right) \in \mathbb R^{p	\times d}  \mbox{ on } \widehat\Lambda(O^{x,s}),
$$
with $\overline \kappa \in C^1(\Lambda(O^{x,s});{\mathbb R}^{r\times (d-r)})$
not depending on $\varphi_1, \ldots , \varphi_r$ and ii) the Stratonovich SDE
\(
dX_t =h(X_t)dt +\sigma(X_t)\bullet dW_{t},\ X_s=x
\)
has a solution  
$X_t=\Lambda^{-1}_t\left(\begin{array}{c}\overline X_t\\\widetilde X_t\end{array}\right)$ on $[0,\tau]$ 
if and only if the simpler SDE 
\[
\!\!d\left[\begin{array}{c}\overline X_t\\\widetilde X_t\end{array}\right]
= \widehat h\left(\!\begin{array}{c}\overline X_t\\\widetilde X_t\end{array}\!\right)dt
+\left(\begin{array}{cc} I_r & \overline \kappa\\ 0 & 0 \end{array}\right) dW^\pi_t,\ \left[\begin{array}{c}\overline X_s\\\widetilde X_s\end{array}\right]=\Lambda_s(x)
\]
has a solution on $[0,\tau]$, where
$\widehat h=(\nabla_\varphi \Lambda_t h+\partial_t\Lambda_t)\circ\widehat\Lambda^{-1}$.
\end{thm}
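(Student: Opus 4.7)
My plan is to invoke Proposition \ref{diff1} for the structural claim (i) and then use the invariance of Stratonovich calculus under $C^2$-diffeomorphisms to obtain the equivalence (ii). First, I would apply Proposition \ref{diff1} to produce the permutation matrix $\pi$, the neighborhood $O^{x,s}$ of $(x,s)$, and the diffeomorphism $\widehat{\Lambda}^{x,s} = (\Lambda_t, t)$ such that $\widehat{\sigma} = \{(\nabla_\varphi\Lambda_t)\sigma^\pi\}\circ\widehat{\Lambda}^{-1}$ has the block form claimed in (i), with $\overline\kappa \in C^1$ depending only on $\widetilde\varphi$ and $t$. Next, I would define $\tau$ to be the minimum of $T$, the first exit time of $X_t$ from the spatial projection of $O^{x,s}$, and the first exit time of $(\overline X_t,\widetilde X_t)$ from $\widehat{\Lambda}^{x,s}(O^{x,s})$. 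Since $(x,s)$ is an interior point of $O^{x,s}$, the continuity of $X$ (respectively $(\overline X,\widetilde X)$) guarantees $\tau>s$ almost surely.

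For the forward direction, assume $X$ solves the Stratonovich SDE on $[s,\tau]$. I would rewrite $\sigma\bullet dW=\sigma^\pi\bullet dW^\pi$ with $W^\pi=\pi^{-1}W$, which is again a standard $\mathbb R^d$-valued Brownian motion because $\pi$ is orthogonal. Setting $Y_t=\Lambda_t(X_t)$ and applying the Stratonovich chain rule (valid since $\Lambda_t\in C^{2,1}$), I obtain
\begin{equation*}
dY_t = \bigl[\partial_t\Lambda_t(X_t) + (\nabla_\varphi\Lambda_t)(X_t)\,h(X_t)\bigr]dt + (\nabla_\varphi\Lambda_t)(X_t)\,\sigma^\pi(X_t)\bullet dW_t^\pi,
\end{equation*}
which, after substituting $X_t=\Lambda_t^{-1}(Y_t)$, becomes $dY_t = \widehat h(Y_t,t)\,dt+\widehat\sigma(Y_t,t)\bullet dW_t^\pi$. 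I would then verify that for this special $\widehat\sigma$ the Stratonovich correction $\frac12\sum_j(\nabla_\varphi\widehat\sigma_j)\widehat\sigma_j$ vanishes identically: columns $j\le r$ are the constant vectors $e_j$, while for $j>r$ the column $(\overline\kappa_{j-r}^{T},0)^{T}$ depends only on $\widetilde\varphi$ but has zero $\widetilde\varphi$-components, so $(\nabla_\varphi\widehat\sigma_j)\widehat\sigma_j\equiv 0$. Hence the transformed equation can be written in It\^o form without the $\bullet$, matching the simpler SDE in the statement, and $(\overline X,\widetilde X)=\Lambda_t(X_t)$ is a solution on $[s,\tau]$.

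For the reverse direction, assume $(\overline X_t,\widetilde X_t)$ solves the simpler SDE on $[s,\tau]$ and define $X_t=\Lambda_t^{-1}(\overline X_t,\widetilde X_t)$. Because $\widehat{\Lambda}^{x,s}$ is a $C^2$-diffeomorphism, $\Lambda_t^{-1}$ is $C^{2,1}$ on $\widehat\Lambda(O^{x,s})$, so I can apply the Stratonovich chain rule in the opposite direction. Using the identities $h=\{(\nabla_\varphi\Lambda_t)^{-1}(\widehat h-\partial_t\Lambda_t)\}\circ\Lambda_t$ and $\sigma^\pi=\{(\nabla_\varphi\Lambda_t)^{-1}\widehat\sigma\}\circ\Lambda_t$, and the cancellation $\sigma^\pi\bullet dW^\pi=\sigma\bullet dW$, I recover $dX_t = h(X_t)\,dt+\sigma(X_t)\bullet dW_t$ with $X_s=\Lambda_s^{-1}(\Lambda_s(x))=x$, as required.

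The main obstacle is purely technical bookkeeping: carefully tracking the column/Brownian-motion permutation, verifying the vanishing of the Stratonovich correction for $\widehat\sigma$ so that the simpler SDE may be written without $\bullet$, and pinning down $\tau$ so that both processes remain in the common domain of smoothness of $\Lambda_t$ and $\Lambda_t^{-1}$. The substantive geometric content is already contained in Proposition \ref{diff1}; the remainder is a direct application of the Stratonovich change-of-variables formula, which is covariant under $C^2$-diffeomorphisms.
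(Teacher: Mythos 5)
Your proposal is correct and follows exactly the route the paper takes: the paper derives Theorem \ref{r2} directly from Proposition \ref{diff1} together with the Stratonovich change of variables, and its remark following the theorem makes the same observation you do that the correction term $\frac12\sum_j(\nabla_\varphi\widehat\sigma_j)\widehat\sigma_j$ vanishes for the block-form $\widehat\sigma$, so the transformed equation reads identically in It\^o and Stratonovich form. Your write-up simply fills in the bookkeeping (the permutation of the Brownian motion, the stopping time, and the two applications of the chain rule) that the paper leaves implicit.
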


We stated the simpler SDE in terms of It\^{o} integration.
However, it follows by (\ref{bh}) and the nature of $\overline\kappa$ that
this equation would have exactly the same form in terms of Stratonovich integration.

In this theorem we do not have a commutator condition for $h$ so we can not
guarantee the simple form of $\widehat h$ as in Example \ref{CoreEG}.
This means that $\widetilde X$ is not in general deterministic nor is $\overline X$
necessarily Gaussian.
We also impose slightly stronger conditions on $\sigma$ compared to
Theorem \ref{r1} but gain
information about the representation as local diffeomorphisms.

\cite{Kouritzin16} solves for a local diffeomorphism $\widehat \Lambda$ of
the form stated in Remark \ref{diffform} corresponding
to the (extended) Heston model, shows that it exists globally, 
finds the corresponding $\widehat h$, and solves the SDEs.
The use of the extended model means that our explicit Heston SDE solutions are weak not strong
because the real Heston model corresponds to just part of the extended model
that includes \emph{extra randomness}.
Also, this approach only works when a condition is imposed on the Heston parameters.
When this condition is not true, one can still obtain an explicit weak solution
by using Likelihoods and Girsanov's theorem to convert to the case where the
condition is true.

For our final main result, we add back the commutator
condition for $h$, and characterize all the solutions $X^{x,s}_t=\phi^{x,s}(Y_t,t)$ to
(\ref{SDE4}) via Example \ref{CoreEG}.
We do this through our basic set of parameters for $(x,s)$:

\begin{defn}\label{def3}
Let ${\mathcal P} = {\mathcal P}^{x,s}_{\sigma}$ be the set of all
$(\widehat \Lambda, \overline \kappa, \overline \beta, \overline \theta, \widetilde h,\pi)$ such that
\begin{enumerate}
\item[P0)] $\pi$ is a constant permutation matrix.

\item[P1)] $(O^{x,s},\widehat \Lambda^{x,s})$ is a
$(x,s)$-local diffeomorphism, where $\widehat \Lambda(\varphi,t) =
\mat{\Lambda_t(\varphi)}{t}$.
For convenience, we let $\Lambda_t = \mat{\overline\Lambda_t}{\widetilde\Lambda_t}$ with
$\overline\Lambda_t \in \mathbb R^r$;

\item[P2)] $\overline \kappa \in C^1(\widehat \Lambda(O);{\mathbb R}^{r\times(d- r)}) $ depends only on
$\varphi_{r+1}, \ldots, \varphi_p$, and $t$;

\item [P3)] $\{(\n_\varphi \Lambda_t )\sigma^\pi\}\circ (\widehat\Lambda)^{-1} =
\left(\begin{array}{rl} I_r & \overline \kappa \\
0 & 0\end{array}\right)$ on $\widehat\Lambda(O)$;

\item[P4)] $\overline \beta \in C^1(\widehat \Lambda(O);{\mathbb R}^{r\times r}) $ depends only
on $\varphi_{r+1}, \ldots, \varphi_p$, and $t$;

\item[P5)] $\overline \theta \in
C^1(\widehat \Lambda(O);{\mathbb R}^r)$ depends only on
$\varphi_{r+1}, \ldots, \varphi_p, t$;

\item[P6)]
$\widetilde h\in C^1(\widehat \Lambda(O);{\mathbb R}^{p- r})$ depends only on
$\varphi_{r+1}, \ldots, \varphi_p, t$.
\end{enumerate}
\end{defn}

To each $(\widehat \Lambda, \overline \kappa, \overline \beta, \overline \theta, \widetilde h,\pi) \in {\mathcal P}$,
we
associate the following functions:
\begin{equation}\label{frel}
\begin{cases}
\widetilde X=\widetilde X^{x,s} \in {\mathbb R}^{p-r} \mbox{ uniquely solves } \frac{d}{dt} \widetilde X_t =
\widetilde h (\widetilde X_t,t),\;
\widetilde X_s =\widetilde \Lambda_s(x);\\
G(t) = \left(I_r \Big{|}\; \overline \kappa( \tilde X_t,t)\right)\in {\mathbb R}^{r\times d};\\
\frac{d}{du}T_{s,u}=-T_{s,u}\,\overline \beta(\widetilde X_u,u),\ \ \forall u\ge s\ \ \text{subject to }T_{s,s}=I_r;\\
\!U_{s,u}\widetilde X=
\left(\!\begin{array}{cc} T_{s,u} & T_{s,u}\overline \kappa(\widetilde X_u,u)-\overline \kappa(\widetilde X_s,s) \\0 & I_{d-r} \end{array}\!\right);\\
\!U^{-1}_{s,u}\widetilde X=
\left(\!\begin{array}{cc} T^{-1}_{s,u} & T^{-1}_{s,u}\overline \kappa(\widetilde X_s,s)-\overline \kappa(\widetilde X_u,u) \\0 & I_{d-r} \end{array}\!\right);\\
\overline c_s(t) = T^{-1}_{s,t}\overline{\Lambda}_s(x)
+ T^{-1}_{s,t} \int_s^tT_{s,u}\overline \theta(\widetilde X_u,u)du.
\end{cases}
\end{equation}


\medskip
The following theorem follows from Theorem \ref{r2}, Theorem \ref{r1} (so the explicit solution
implies $B$ above) and Example \ref{CoreEG}.
In particular, we must have
\begin{equation}\label{possibleh2}
(\nabla_\varphi \Lambda_t h+\partial_t\Lambda_t)\circ\widehat\Lambda^{-1}
=\left(\begin{array}{c}\overline h(\overline \varphi,\widetilde \varphi,t)\\\widetilde h(\widetilde \varphi,t)\end{array}\right)
=
\left(\begin{array}{c}
\overline \theta(\widetilde \varphi,t)+
\overline \beta(\widetilde \varphi,t)\overline \varphi\\\widetilde h(\widetilde \varphi,t) 
\end{array}\right),
\end{equation}
which gives our possible drifts $h$ in the following theorem.
\medskip
\begin{thm}\label{r3}
Suppose [$D$, $\partial_1$, $H_r$] hold, $(x,s)\in D_T$
and $X_t^{x,s} = \phi^{x,s}\left(\int_s^t U_{s,u}\phi^{x,s}dW^\pi_u,t
\right)$, with $\phi,U$ satisfying $\mathcal C_1$, $\mathcal C_2$, $\mathcal C_3$, solves (\ref{SDE4}) up to
some stopping time
$\tau^{x,s}>s$.
Then, there exists $((O^{x,s},\widehat \Lambda^{x,s}), \overline \kappa, \overline \beta, \overline \theta, \widetilde h,\pi)
\in {\mathcal
P}^{x,s}_{\sigma}$, and related functions $\widetilde X, G, U, \overline c$ defined
by (\ref{frel}), such that
\begin{equation} \label{hdef}
h= [\n_\varphi \Lambda_t]^{-1}\left\{
\left[\begin{array}{c}\overline\theta (\widetilde X_t,t)\\
\widetilde h (\widetilde X_t,t)\end{array}\right] -\partial_t \Lambda_t
 + \mat{\overline\beta(\widetilde X_t,t)\ \overline \Lambda_t}{0} \right\} \mbox{ on } O^{x},
\end{equation}
\begin{equation} \label{fdef}
\phi^{x,s}(y,t) = \phi_{(\widehat\Lambda, \overline \kappa, \overline\beta,\overline \theta, \widetilde h)}(y,t) =
\Lambda_t^{-1}\left(  \mat{\overline c_s(t) +G(t)(U^{-1}_{s,t}\widetilde X)y}{\widetilde X_t}\right)
\end{equation}
on ${\mathcal N}^{x} = \left\{(y,t):
\mat{\overline c_s(t) +G(t)U^{-1}_{s,t}\widetilde Xy}{\widetilde X_t} \in \Lambda_t(O^{x,s})\right\}$.
Finally, if $\breve\pi$, $\breve\Lambda$ and $\breve\kappa$ also satisfies P0--P3,
then there exist $\breve\beta, \breve\theta,\breve h$ such that
$ (\breve\Lambda,\breve \kappa,\breve\beta, \breve\theta,\breve h, \breve\pi) \in {\mathcal P}$,
$b_{(\breve\Lambda, \breve\kappa,\breve\beta,\breve \theta,\breve h, \breve\pi)}
= b_{(\widehat\Lambda, \overline\kappa, \overline\beta, \overline\theta,\widetilde h)}$, and
$\phi_{(\breve\Lambda, \breve\kappa,\breve\beta, \breve\theta,\breve h, \breve\pi)}
= \phi_{(\widehat\Lambda, \overline\kappa, \overline\beta, \overline\theta,\widetilde h)}$.
\end{thm}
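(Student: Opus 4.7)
The plan is to chain together the machinery already assembled: Theorem \ref{r1} extracts the commutator conditions from the hypothesis of an explicit solution, Proposition \ref{diff1} and Theorem \ref{r2} produce the straightening diffeomorphism, Lemma \ref{dc} transports the representation into the new coordinates, and Example \ref{CoreEG} solves the straightened problem in closed form. The task is to splice these together and then handle the gauge-freedom (uniqueness) statement.

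First I would use the hypothesis that $X_t^{x,s}=\phi^{x,s}(Y_t,t)$ is an explicit local solution with $\phi,U$ satisfying $\mathcal C_1$--$\mathcal C_3$ to invoke the implication a)$\Rightarrow$c) of Theorem \ref{r1}: this gives $\sigma\in C^1$, $h\in C^1$ and the commutator conditions (\ref{braket2}),(\ref{h}) on $D_T$. In particular condition $B$ of Theorem \ref{r2} holds, so combined with the standing assumptions $[D,\partial_1,H_r]$, Proposition \ref{diff1} yields a permutation $\pi$ and an $(x,s)$-local diffeomorphism $(O^{x,s},\widehat\Lambda^{x,s})$ with $\widehat\sigma=\begin{pmatrix} I_r & \overline\kappa \\ 0 & 0\end{pmatrix}$ and $\overline\kappa$ independent of $\varphi_1,\ldots,\varphi_r$. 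This already delivers P0--P3.

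Next I would apply Lemma \ref{dc} to transport the original explicit representation into the straightened coordinates, producing $\widehat\phi,\widehat U,\widehat h,\widehat A$ that still satisfy $\mathcal C_1$--$\mathcal C_3$ together with (\ref{bh}),(\ref{SecondA}), and that continue to satisfy (\ref{braket2hat}),(\ref{hhat}) by parts ii)--iii) of the lemma. Example \ref{CoreEG} then applies verbatim in the straightened coordinates: its Steps 1--3 force $\widehat h$ to split as $\widehat h(\overline\varphi,\widetilde\varphi,t)=(\overline\theta(\widetilde\varphi,t)+\overline\beta(\widetilde\varphi,t)\overline\varphi,\;\widetilde h(\widetilde\varphi,t))^T$ with $\overline\beta,\overline\theta,\widetilde h$ independent of $\overline\varphi$, thereby exhibiting P4--P6 and confirming the tuple belongs to $\mathcal P^{x,s}_\sigma$. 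Its Steps 4--5 then build $\widetilde X,T_{s,t},\overline c_s$ and show that $\widehat\phi$ and $\widehat U$ take exactly the form listed in (\ref{frel}). Inverting the diffeomorphism via $X_t=\Lambda_t^{-1}(\widehat\phi^{x,s}(y,t))$ and using $\widehat h=(\nabla_\varphi\Lambda_t\, h+\partial_t\Lambda_t)\circ\widehat\Lambda^{-1}$ immediately yields (\ref{hdef}) and (\ref{fdef}).

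For the final gauge-equivalence assertion, I would take another triple $(\breve\pi,\breve\Lambda,\breve\kappa)$ satisfying P0--P3 and repeat the above construction verbatim: transporting $\sigma,h$ into the $\breve\Lambda$ coordinates, the commutator identities again hold there, so Example \ref{CoreEG} Step 2 forces a block-form drift in those coordinates. Reading off $\breve\beta=\nabla_{\overline\varphi}\overline h^{\,\breve{}}$ and then $\breve\theta,\breve h$ from the constant and lower blocks defines the remaining parameters uniquely, giving $(\breve\Lambda,\breve\kappa,\breve\beta,\breve\theta,\breve h,\breve\pi)\in\mathcal P$. Since the same original pair $(\sigma,b)$ produces, by Theorem \ref{r1}a), a unique strong local solution on $[s,\tau^{x,s})$, both $\phi_{(\widehat\Lambda,\ldots)}$ and $\phi_{(\breve\Lambda,\ldots)}$ must equal this common solution path-wise, and the corresponding drifts computed from (\ref{hdef}) (now with $\breve\Lambda$) must equal the same $h$, hence $b$ via (\ref{bh}). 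The main obstacle I expect is bookkeeping in the transport step: one must carefully verify that the composition $\widehat\phi^{x,s}=\Lambda_t\circ\phi^{\widehat\Lambda^{-1}(x,s)}$ preserves $\mathcal C_1$--$\mathcal C_3$ and the semigroup property (\ref{Ugroup}), that the time piece $\partial_t\Lambda_t$ is correctly absorbed into $\widehat h$, and that the uniqueness argument really does use strong uniqueness rather than relying on a nonexistent algebraic gauge identity between the two parameter tuples.
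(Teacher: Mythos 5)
Your proposal follows essentially the same route as the paper, which derives Theorem \ref{r3} precisely by chaining Theorem \ref{r1} (explicit solution $\Rightarrow$ commutator condition $B$), Proposition \ref{diff1}/Theorem \ref{r2} (the straightening diffeomorphism giving P0--P3), Lemma \ref{dc} (transport of the representation and of conditions (\ref{braket2}),(\ref{h})), and Example \ref{CoreEG} (which forces the affine form of $\widehat h$, hence P4--P6, and yields (\ref{hdef}),(\ref{fdef}) after pulling back). The paper offers no more detail on the final gauge-equivalence claim than you do, so your reliance on the construction plus strong uniqueness is consistent with its treatment.
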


\begin{RM}\label{DiffFind}
For the sake of brevity in the examples below, we will just give local diffeomorphisms
satisfying P3) above.
However, as is shown in our companion paper \citet{Kouritzin16}, it is
often possible to solve for them using the technique used in the proof
of Proposition \ref{diff1} herein.
\end{RM}

\begin{RM}\label{Remark7}
To illustrate the need of the final statement of Theorem \ref{r3}, we take for example,
$\sigma (x) = x \in {\mathbb R}^p$. Then, any  $L \in C^1({\mathbb R}^p)$ depending on $x_2/x_1,
\ldots, x_p/x_1$ satisfies
$(\nabla L)\sigma =0$.  Therefore, $\widehat\Lambda $ and hence the parameter set
is not unique but we can create the same $b,\phi$ from any consistent
$\overline \kappa ,\widehat\Lambda .$
\end{RM}

\subsection{One Dimensional Case} 
Suppose $d=p=r=1$, $D\subset\mathbb R$ and $x\in D$.
Then, $\overline\kappa,\widetilde h$ do not exist and
$\overline\beta,\overline\theta$ only depend on $t$.
Moreover, $U_{s,t}=T_{s,t}=e^{-\int_s^t \overline\beta(u)du}$,
$\overline c_s(t)=T_{s,t}^{-1}\overline{\Lambda}_s(x)+T_{s,t}^{-1}\int_s^t T_{s,u}\overline\theta(u)du$
and the diffeomorphism can be taken as
$\Lambda_t(\varphi) = \int\frac{1}{\sigma(\varphi ,t)}d\varphi $.
One then finds by (\ref{bh},\ref{frel},\ref{hdef},\ref{fdef}) that
the corresponding diffusion drift $b$ and explicit solutions are 
\begin{equation}\label{scalardrift}
b(\varphi,t) = \sigma(\varphi,t)\left\{\overline\theta(t) +\overline\beta(t)\Lambda_t(\varphi) - \partial_t \Lambda_t\right\}+\frac{1}{2}\sigma(\varphi,t)\partial_\varphi \sigma(\varphi,t)
\end{equation}
\begin{equation}\label{scalarsoln}
X_t = \Lambda_t^{-1}\left[\left\{\Lambda_s(x)+ \int_s^t
T_{s,u} \overline\theta(u)du +  \int_s^t T_{s,u} dW_u\right\}\Big{/}T_{s,t}\right].
\end{equation}

\begin{ex}[Time-varying Cox-Ingersoll-Ross model]
Suppose $\overline\theta, \overline\beta$ and continuously differentiable
$s(t)>0$ are chosen and
$\sigma(\varphi,t) = s(t) \sqrt{\varphi}$.
Then, 
\(
\Lambda_t(\varphi) = \frac{2\sqrt{\varphi}}{s(t)} 
\),
\(
\Lambda_t^{-1}(z)=\left(\frac{zs(t)}2\right)^2
\)
and the possible It\^{o} drifts are
\[
b(\varphi,t)=\overline\theta(t)s(t)\sqrt{\varphi}+2\left(\overline\beta(t)+\frac{\dot{s}(t)}{s(t)}\right)\varphi+\frac{s^2(t)}4.
\]
The explicit solutions are then
\begin{eqnarray}\label{CIRsoln}
\!\!X_t^{x,s} &\!\!= &\!\!\bigg|\frac{s(t)}{s(s)}e^{\int_s^t \overline\beta(v)dv}\sqrt{x}\\
	\nonumber
&\!\!+&\!\!\frac{s(t)}2\left\{ \int_s^t
	e^{\int_u^t \overline\beta(v)dv} \overline\theta(u)du +  \int_s^t e^{\int_u^t \overline\beta(v)dv} dW_u\right\}\,\bigg|^2.
\end{eqnarray}
In the case $s(t)=\sigma,\overline \theta$ and $\overline\beta$ are taken constant, 
we get
$$
\!X_t^{x,s} = \frac{1}{4}\left\{
2e^{\overline \beta (t-s)}\sqrt{x} 
+ \frac{\overline \theta \sigma}{\overline \beta} (e^{\overline \beta (t-s)}-1) + 
\sigma\! \int_s^t\! e^{\overline \beta(t-u)}dW_u
\right\}^2
$$
solves
$$
dX_t^{x,s} = \left(\sigma^2/4 +2\overline \beta X_t^{x,s} +\sigma \overline\theta \sqrt{X_t^{x,s}}
\right)dt+  \sigma \sqrt{X_t^{x,s}} dW_t, \ X_s=x
$$
as long as $X_t^{x,s}>0$.
This solves the usual CIR model
\begin{equation}\label{eq:feller}
dX_t = \alpha \left(\beta  -X_t \right)dt+  \sigma \sqrt{X_t} dW_t.
\end{equation}
when $\overline\theta =0$, $\alpha = 2\overline \beta$, $\beta = \sigma^2/(8\overline \beta)$.
Now, set $Y_t  = \sqrt{X_t}$, where $X$ solves \eqref{eq:feller}
with $\sigma^2 = 4\alpha\beta$, and $\tau =
\inf\{t >0; X_t = 0\}$. 
It is well known that $P(\tau<\infty)=1$.
Then, 
\begin{eqnarray}
dY_t  &=& \frac{1}{8Y_t}\left(4\alpha\beta - \sigma^2\right)dt -
\frac{\alpha}{2} Y_t dt+
\frac{\sigma}{2} dW_t \nonumber\\
&=&  - \frac{\alpha}{2} Y_t dt+ \frac{\sigma}{2}
dW_t,\label{eq:OU}
\end{eqnarray}
by It\^o's formula.
However, since \eqref{eq:OU}
defines a Gaussian process and $Y$ must be non-negative, one cannot have $Y_t$ defined by
\eqref{eq:OU} unless $t < \tau$.
This explains why we first look for explicit \emph{local} solutions.
\end{ex}

\subsection{Square Non-Singular Case} 
\label{square}
Suppose that $d=p=r$, $\sigma = \sigma(\varphi,t)$ is a $d\times d$ non-singular
continuously-differentiable matrix satisfying (\ref{braket2}), $D\subset\mathbb R^p$ 
and $x\in D$. 
Again, we apply Theorem \ref{r3} and find $\overline\kappa,\widetilde h$ do not exist while
$\overline\beta,\overline\theta$ only depend on $t$.
Also, there is a local diffeomorphism
$\widehat \Lambda=\left( \begin{array}{c}\Lambda_t\\t\end{array}\right)$ such that $ \n_\varphi \Lambda_t(\varphi) = [\sigma(\varphi,t)]^{-1}$, and
all explicit solutions are of the form $\phi^{x,s} (t,y) =
\Lambda_t^{-1}\left(\overline c_s(t) + U^{-1}_{s,t}y\right)$, where
\[
U_{s,t} =
-\int_s^t U_{s,u} \overline\beta(u)du +I \text{ and } \overline c_s(t)
=U^{-1}_{s,t}\left\{\Lambda_s(x)+ \int_s^t U_{s,u}\overline\theta(u)du \right\}
\]
for some $\overline\theta \in C([0,T);\mathbb R^d)$ and $\overline\beta\in
C^1([0,T),\mathbb R^{d\times d})$. 
The resulting drift is 
$$
b(\varphi,t) = \sigma(\varphi,t)\left\{\overline\theta(t) + \overline\beta(t)\Lambda_t(\varphi) - \partial_t \Lambda_t(\varphi)\right\}
+\frac{1}{2}\sum_{j=1}^d (\n_\varphi \sigma_j(\varphi,t))\sigma_j(\varphi,t).
$$

\begin{ex}\emph{Geometric Brownian motions:}
Take $\sigma_{ij}(\varphi) = \varphi_i
\gamma_{ij}$ with $\gamma$ non-singular and $D = (0,\infty)^d$. 
Then, $\sigma$ satisfies the
commutation condition (\ref{braket2}) since $ [ (\n_\varphi \sigma_j)\sigma_k]_i = \varphi_i
\gamma_{ij}\gamma_{ik}$, and the diffeomorphism can be chosen as
$\Lambda(\varphi) = \Lambda_t(\varphi) =\gamma^{-1} \mat{\log{\varphi_1} \\ \vdots}{\log{\varphi_d}}$.
$\Lambda$'s image is $\mathbb R^d$, so
$\Lambda^{-1}(z) = \mat{e^{(\gamma z)_1}\\ \vdots}{e^{(\gamma z)_d}}$ is defined everywhere
and $\phi_i^{x,s}(y,t) = \exp\left[ \gamma \{\overline c_s(t) + U^{-1}_{s,t}y\}\right]_i$.
The possible drifts satisfy
$$
b_i(\varphi,t) = \varphi_i \left\{\alpha_i(t) -\sum_{j=1}^d B_{ij}(t)
\log{\varphi_j}\right\},
$$
for $1\le i \le d$,
where $B(t) =  \gamma\overline\beta(t) \gamma^{-1}$, 
and $\alpha_i(t)=\frac12[\gamma
\gamma^\top]_{ii}+ [\gamma\overline\theta(t)]_i$.
\end{ex}

\begin{ex}
\emph{Diffeomorphism example:}
In the previous examples, we started with $\sigma$.
Suppose instead we had a diffeomorphism
$$
\Lambda (\varphi_1,\varphi_2) = \Lambda_t(\varphi_1,\varphi_2) = \mat{\frac{\pi}{2}  + \arcsin( \log{\varphi_1
\varphi_2}-1)}{\frac{\pi}{2}  + \arcsin( \frac{2\varphi_2}{\varphi_1}-1)}
$$
on $ 1 < \varphi_1 \varphi_2 < e$, $ \varphi_2 \le \varphi_1$.  
Then, the possible full rank $\sigma$'s satisfy $\sigma=(\nabla_\varphi \Lambda)^{-1}$ i.e.\
\begin{equation}
\sigma(\varphi_1,\varphi_2) = \left(\!\begin{array}{cc}
\frac{\varphi_1}{2}\sqrt{2\log{\varphi_1 \varphi_2}-(\log{\varphi_1 \varphi_2})^2}
& -\frac{\varphi_1}{2\varphi_2}\sqrt{\varphi_2(\varphi_1-\varphi_2)}\\
& \\
\frac{\varphi_2}{2}\sqrt{2\log{\varphi_1 \varphi_2}-(\log{\varphi_1 \varphi_2})^2} &
-\frac{1}{2}\sqrt{\varphi_2(\varphi_1-\varphi_2)}
\end{array}\!\right)
\end{equation}
so $(\n \Lambda)\sigma  = I_2$ and $\sigma$ satisfies (\ref{braket2}) by Lemma \ref{dc} ii).
The possible Stratonovich (time-dependent) drifts $h(\varphi_1, \varphi_2,t)$ are
\begin{equation}
\sigma(\varphi_1, \varphi_2)\!\left(\!\!\begin{array}{c}
\overline\theta_1(t)+\overline\beta_{11}(t)(\frac{\pi}{2} \! +\! \arcsin( \log{\varphi_1
\varphi_2}-\!1) )
-\!\overline\beta_{12}(t)(\frac{\pi}{2} \! + \!\arcsin( \frac{2\varphi_2}{\varphi_1}-\!1)\!)\\
\overline\theta_2(t)+\overline\beta_{21}(t)(\frac{\pi}{2} \! +\! \arcsin( \log{\varphi_1
\varphi_2}-\!1) )
-\!\overline\beta_{22}(t)(\frac{\pi}{2} \! +\! \arcsin( \frac{2\varphi_2}{\varphi_1}-\!1)\!)
\end{array}\!\!\!\right)	
\end{equation}
while $U_{s,t},\overline c_s$ satisfy the equations at the start of Subsection
\ref{square}.
\end{ex}

\subsection{Non-Square Case} 
Our most important example is probably the \emph{Extended Heston}
model of our companion paper \cite{Kouritzin16}.
It is non-square.
However, we provide a second interesting non-square example herein.
\begin{ex}[Heisenberg group] 
Let $\overline x\in\mathbb R^d$ and  $\widetilde x\in\mathbb R$ be the components
of the starting point, $A=A(t)$ be a $\mathbb R^{d\times d}$ continuously differentiable
matrix function and 
$\sigma(\varphi,t) = \sigma(\xi,z,t) =
\mat{I_d}{(A(t)\xi)^\top}$, where $\xi\in \mathbb R^d$, $z\in \mathbb R$. 
Then, $\sigma$ has rank $r=d$. 
The solution to $dX_t = \sigma(X_t,t)dW_t$ is known as the Brownian motion
on the Heisenberg group.
Moreover,
$$
(\n_\varphi\sigma_j)\sigma_k- (\n_\varphi\sigma_k)\sigma_j =
\mat{0}{A_{jk}-A_{kj}}.
$$
Therefore, (\ref{braket2}) holds true if and only if $A$
is symmetric. 
In this case, one can solve for an explicit solution for an
arbitrary starting point $(\overline x,\widetilde x,s)$.
The diffeomorphism $\widehat
\Lambda(\xi,z,t) = \mat{\Lambda_t(\xi,z)}{t}$ is solved
$\Lambda_t(\xi,z) =  \mat{\xi}{g}$ with $g(\xi,z,t) = z-\frac{1}{2}\xi^\top A(t)\xi$
following the proof of Proposition \ref{diff1} in the Appendix (see 
\cite{Kouritzin16} for details on a more involved example).
Hence, $\pi=I_d$, $\widehat\sigma= \left[ \begin{array}{cc}I_d \\0\end{array}\right]$,
$\overline\kappa$ does not exist so $G(t)=I_d$ and	
\(
[\nabla\Lambda_t]^{-1}=
\left[
\begin{array}{cc}	
	I_d &0\\ \xi^\top A(t)&1
\end{array}	
\right].
\)
Now, we can take any functions $\overline\theta\in\mathbb R^d$, $\overline\beta\in\mathbb R^{d\times d}$, $\widetilde h\in\mathbb R$
satisfying the differentiability conditions in Definition \ref{def3}
and let $\widetilde X_t,\ U_{s,t}\widetilde X,\ \overline c_s(t)$ satisfy:
\begin{eqnarray*}
\frac{d}{dt} \widetilde X_t &\!\!= &\!\!\widetilde h(\widetilde X_t,t)\ \text{s.t.\ }\widetilde X_s=\widetilde x-\frac{1}{2}\overline x^\top A(s)\overline x
\\
\frac{d}{du}U_{s,u}\widetilde X&\!\!=&\!\!-(U_{s,u}\widetilde X)\,\overline \beta(\widetilde X_u,u)\  \text{s.t.\ }U_{s,s}\widetilde X=I_d	
\\
\overline c_s(t) &\!\!=&\!\! U^{-1}_{s,t}\left\{\overline x +\disp \int_0^t
U_{s,u}\overline \theta(\widetilde X_u,u)du \right\}.
\end{eqnarray*}
From Theorem \ref{r3} and (\ref{bh}), drift $b$ must be of the (quadratic) form
$$
b(\xi,z,t)\! =\! \mat{\overline\theta(\widetilde X_t,t)-\overline\beta(\widetilde X_t,t)\xi}
{\widetilde h(\widetilde X_t,t) +
\xi^\top A(t)\overline\theta(\widetilde X_t,t)-\xi^\top A(t) \overline\beta(\widetilde X_t,t)\xi +\frac12 \xi^\top \frac{d}{dt}A(t)\xi +\frac{1}{2}{\rm Tr}\{A(t)\} }
$$
for some $\overline\theta$, $\overline\beta$, $\widetilde h$.
Finally, the corresponding $\phi$ is given by
$$
\phi(y,t) = \mat{\overline c_s(t)+(U^{-1}_{s,t}\widetilde X)y}{\widetilde X_t
+\frac{1}{2}(\overline c_s(t)+(U^{-1}_{s,t}\widetilde X)y)^\top
A(t)(\overline c_s(t)+(U^{-1}_{s,t}\widetilde X)y)}.
$$
\end{ex}


\section{Comparison with the works of Yamato and Kunita} \label{kun}
Now, we compare our existence results to those appearing in
\citet{Yamato:1979}  and \citet{Kunita:1984}. In Section III.3 of
Kunita's treatise, he considers representations of time-homogeneous
Fisk-Stratonovich equations
\begin{equation} \label{N2.6a}
dX^{x}_t = h(X^{x}_t)dt+\sigma(X^{x}_t)\bullet dW_t
\end{equation}
in terms of the flows generated by the vector
fields
\begin{equation}\label{VectField}
{\mathfrak X}_0(y)= \disp \sum_{i=1}^ph_i(y)\frac \partial
{\partial y_i} \text{ and }{\mathfrak X}_k(y)= \disp \sum_{i=1}^p \sigma
_{ik}(y)\frac \partial {\partial y_i}, k=1,...,d,
\end{equation}
under
conditions imposed on the Lie algebra $L_0({\mathfrak X}_0,
{\mathfrak X}_1, \ldots, {\mathfrak X}_d)$ generated by
${\mathfrak X}_k$, $0\le k\le d$. In the special case where these
vector fields commute, i.e. the Lie bracket $[{\mathfrak
X}_k,{\mathfrak X}_j]=0 $ for each $j,k=0,...,d$, and the
coefficients $h_i$, $\sigma _{ik}$ are respectively in
$C^3_\alpha$, $C^4_\alpha$ (the locally four times
continuously differentiable functions whose fourth derivative is $\alpha $%
-H\"older continuous), his work gives rise to the composition formula
\begin{eqnarray}
\left( X_t^{x}\right) _i &=& Exp\left( t{\mathfrak X}_0\right) \circ Exp\left(
W_t^1{\mathfrak X}_1\right) \circ \cdots \circ Exp\left( W_t^d{\mathfrak X}_d\right)
\circ \chi_i(x),  \label{Kunita} \\
\  &=&\phi _i(W_t,t)  \nonumber
\end{eqnarray}
locally. Here, $\chi _i$ is the function taking $x$ to its $i^{th}$
component and $Exp\left( u{\mathfrak X}_k\right) $ is the one parameter group of
transformations generated by vector field ${\mathfrak X}_k$, i.e. the unique solution to
\begin{equation}
\frac d{du}(f\circ \varphi _u)={\mathfrak X}_kf(\varphi _u),\;\varphi _0=x\qquad \forall
f\in C^\infty .  \label{Kde}
\end{equation}
In fact, to use (\ref{Kunita}), one must solve (\ref{Kde}) for
$k=0,...,d$ and $f=\chi _i$, $i=1,...,d$. Kunita also goes beyond
commutability, even surpassing \citet{Yamato:1979}  in generality by
considering the situation where $L_0({\mathfrak X}_0,...,{\mathfrak
X}_d)$ is only solvable, but the expression replacing (\ref{Kunita})
necessarily becomes more unwieldy.

Our characterization of $\phi $ provided by Theorem \ref{r3} provides an
alternative to (\ref{Kunita}) that is more amenable to direct
calculation. 
Corollary \ref{corr3} (to follow) supplies a converse to
(\ref{Kunita}) in the sense that if $X_t^{x,s}$ were to have such a
functional representation $\phi^{x,s}(W_t,t)$ in terms of Brownian
motions only, then the vector fields must commute. This was
previously established in Theorem 4.1 of \citet{Yamato:1979}  under
$C^\infty$ conditions on both $\phi$ and the coefficients. 

The other advantages of our representations over Kunita's results
are:
\begin{itemize}
\item We allow time dependent vector fields.
\item We decrease the regularity assumptions by imposing weaker
differentiability
on $h$ and on $\sigma$ when $r$ is small. The looser
regularity on the
coefficients requires eschewing Fisk-Stratonovich equations in
favour of It\^o processes.
\item We remove the nilpotency assumptions (for our representations).
\end{itemize}

To validate the final claim, we take $p=2$, $d=1$,
\[
{\mathfrak
X}_0 = \{\overline \theta(x_2)-B(x_2)x_1\} \dx{1} + \widetilde \theta(x_2)\dx{2},
\]
and ${\mathfrak X}_1 = \dx{1}$. Then $[{\mathfrak X}_0,{\mathfrak
X}_1] = B\dx{1}$. Moreover, if $ {\mathfrak X}_k = [{\mathfrak
X}_0,{\mathfrak X}_{k-1}]$, $k\ge 2$, then  $ {\mathfrak X}_k =
a_k(x_2)\dx{1}$, where $a_{k+1} = \widetilde \theta (\dx{2} a_k)+ Ba_k$,
$k\ge 1$ and $a_1 =1$.
In general, the $a_k$'s will not vanish
and thereby the Lie algebra contains an infinite number of
linearly independent vector fields. This algebra is solvable but
is not nilpotent.

\medskip
Using Theorem \ref{r1}, we can also give the converse to Kunita's result,
Example III.3.5 in \citet{Kunita:1984}, that is valid under the mild
regularity on $b,\sigma,h$ given at the beginning of the section.

\bigskip
\begin{cor}\label{corr3}
Suppose that there exists a domain $\widetilde{D}$ such that the
coefficients $\sigma$ and $h$ are time-homogeneous and
Fisk-Stratonovich acceptable on
$\tilde D_T = \tilde D\times (0,T)$.
Further, assume that the solution to the Fisk-Stratonovich equation
(\ref{N2.6a}) has a unique local solution
$$
\left( X_t^{x}\right) _i=Exp\left( t{\mathfrak X}_0\right) \circ Exp\left( W_t^1{\mathfrak X}_1\right)
\circ \cdots \circ Exp\left( W_t^d{\mathfrak X}_d\right) \circ \chi _i(x)
$$
on $0\leq t<\tau _x$ for some positive stopping time $\tau _x$ and
each $x\in \widetilde{D}$, where ${\mathfrak X}_k$, $k=0,1,\ldots,
d$ are the vector fields defined in (\ref{VectField}).
Then,
$$
[{\mathfrak X}_k,{\mathfrak X}_j]=0    \mbox{ on }\widetilde{D}\mbox{ for each }j,k=0,\ldots,d.
$$
\end{cor}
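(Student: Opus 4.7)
The plan is to recognize Kunita's composition formula as precisely the representation $X_t^{x,s}=\phi^{x,s}(Y_t^s,t)$ of Theorem \ref{r1} with the trivial choice $U_{s,t}\equiv I_d$, and then to read off the commutator conditions from implication (a)$\Rightarrow$(c) of that theorem. Vanishing of the Lie brackets $[\mathfrak X_k,\mathfrak X_j]$ is equivalent to the commutator identities (\ref{braket2}) and (\ref{h}), so the corollary will drop out.

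First I would use time-homogeneity of $\sigma,h$ to promote the stated representation at $s=0$ to every starting point $(x,s)\in \widetilde D_T$. Specifically, set
\[
\phi^{x,s}(y,t)=\mathrm{Exp}\bigl((t-s)\mathfrak X_0\bigr)\circ\mathrm{Exp}(y_1\mathfrak X_1)\circ\cdots\circ\mathrm{Exp}(y_d\mathfrak X_d)(x),\qquad U_{s,t}\equiv I_d,
\]
so that $Y_t^s=\int_s^t U_{s,u}dW_u=W_t-W_s$ and, by stationarity of the SDE and independence of increments, $X_t^{x,s}=\phi^{x,s}(Y_t^s,t)$ is the unique local solution from $(x,s)$. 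I would then check $\mathcal C_1,\mathcal C_2,\mathcal C_3$: differentiability of the individual flows $\mathrm{Exp}(u\mathfrak X_k)$ in $(u,x)$ gives $\mathcal C_1$; $\phi^{x,s}(0,s)=x$ and $U_{s,s}=I_d$ give $\mathcal C_2$; while $\mathcal C_3$ is trivial, since $U$ does not depend on $y$, is always nonsingular, and (\ref{Ugroup}) reduces to $0=0$. The supplementary mixed-partial regularity in Definition \ref{RepPair} likewise reduces to smoothness of the flows.

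Because $U_{s,t}\equiv I_d$, the generator of Theorem \ref{r1} vanishes: $A(x,s)=\frac{d}{dt}U_{s,t}\phi^{x,s}\bigl|_{t=s}=0$. Applying implication (a)$\Rightarrow$(c) of Theorem \ref{r1} therefore yields, on $\widetilde D$,
\[
(\nabla_\varphi\sigma_k)\sigma_j=(\nabla_\varphi\sigma_j)\sigma_k,\qquad (\nabla_\varphi h)\sigma_j=(\nabla_\varphi\sigma_j)h,
\]
where the second identity uses that $\partial_t\sigma_j=0$ (time-homogeneity) and $A=0$. A direct computation of the Lie bracket of $\mathfrak X_j=\sum_i\sigma_{ij}\partial_i$ and $\mathfrak X_k=\sum_i\sigma_{ik}\partial_i$ gives
\[
[\mathfrak X_j,\mathfrak X_k]=\sum_i\bigl((\nabla_\varphi\sigma_k)\sigma_j-(\nabla_\varphi\sigma_j)\sigma_k\bigr)_i\,\partial_i,
\]
and analogously $[\mathfrak X_0,\mathfrak X_j]$ corresponds to $(\nabla_\varphi\sigma_j)h-(\nabla_\varphi h)\sigma_j$. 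Hence $[\mathfrak X_k,\mathfrak X_j]=0$ for all $j,k\in\{1,\dots,d\}$ and $[\mathfrak X_0,\mathfrak X_j]=0$ for all $j\in\{1,\dots,d\}$; combined with the trivial $[\mathfrak X_0,\mathfrak X_0]=0$, this finishes the proof.

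The main obstacle is the bookkeeping to confirm that $(\phi^{x,s},U_{s,t})$ really satisfies the regularity encoded in $\mathcal C_1$ and Definition \ref{RepPair}, which is needed before Theorem \ref{r1}(a)$\Rightarrow$(c) can be invoked. Here one must lean on the Fisk--Stratonovich acceptability hypothesis, which supplies the $C^1$ regularity of $\sigma$ and $h$ required by Theorem \ref{r1} as well as enough smoothness of the flows $\mathrm{Exp}(u\mathfrak X_k)$ in both $u$ and the initial point to produce the continuous mixed partial derivatives $\partial_s\nabla_y\phi^{x,s}$, $\partial_s\partial_t\phi^{x,s}$ and their $\partial_{x_i}$ counterparts on a common neighbourhood of $(0,s)$. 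Once that verification is in place, all the remaining steps are formal.
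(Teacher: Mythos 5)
Your proposal is correct and follows essentially the same route as the paper: the paper's own (two-sentence) proof likewise identifies Kunita's composition formula as the representation $X_t^{x}=\phi(Y_t,t)$ with $U_{s,t}=I$, deduces $\sigma A=0$ from Theorem \ref{r1}, and reads the vanishing Lie brackets off (\ref{braket2}) and (\ref{h}). You have merely filled in the details the paper leaves implicit (extension to arbitrary starting times via time-homogeneity, verification of $\mathcal C_1$--$\mathcal C_3$, and the explicit bracket computation), which is consistent with the intended argument.
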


\begin{proof}
We find that $X^{x}_t=\phi(Y_t,t)$ with $U_{s,t}=I$ so it follows
from Theorem \ref{r1} that $\sigma A=0$.  
The condition $[{\mathcal
X}_k,{\mathcal X}_j]=0$ then follows from (\ref{braket2},\ref{h}).
\end{proof}


\section{Proofs of the main results}\label{proofs}

We note that $b,\sigma$ are Lipschitz on
any compact, convex subset of $D_T$ by our $C^1$-conditions and use
the proof  of \citet{Kunita:1984}[Theorem II.5.2] for
uniqueness of (strong) local solutions to the SDE until they leave such a compact subset.

\subsection{Proof of Theorem \ref{r1} a) is equivalent to b).}\label{Prop1proof}
\begin{proof}
Using (\ref{Ydefn}) and It\^o's formula for $X_t=\phi(Y_t,t)$, one finds that for any
$1\le i\le p$,
\begin{eqnarray}
\!\!&\!\!\!&\!\!\!d(X_t)_i =\sum_{m=1}^d\sum_{j=1}^d\partial
_{y_m}\phi_i(Y_t,t)(U_{s,t}\phi)_{mj}dW_t^j\label{ctscoef}\\
&+&\!\!\!\!\left[ \partial _t\phi_i(Y_t,t)+ \nonumber
\frac{1}{2}\sum_{j=1}^d\sum_{k=1}^d\partial _{y_j}\partial
_{y_k}\phi_i(Y_t,t)(U_{s,t}\phi\ (U_{s,t}\phi)^{\top })_{jk}\right]\! dt.
\end{eqnarray}
Now, starting with b) implies a) and using (\ref{grad},\ref{dtphi}) on (\ref{ctscoef}),
we find
\begin{eqnarray}
\!\!d(X_t)_i \!\!\!&=&\!\!\!\sigma_i(\phi(Y_t,t),t) dW_t+h_i(\phi(Y_t,t),t)dt\label{ctscoef1}\\
 &+&\!\! \nonumber
\frac{1}{2}\sum_{j=1}^d\sum_{k=1}^d\partial _{y_j}\partial
_{y_k}\phi_i(Y_t,t)(U_{s,t}\phi\ (U_{s,t}\phi)^{\top } )_{jk} dt.
\end{eqnarray}
Moreover,
$$
\dy{m} \{\sigma_{ij}(\phi,t)\} = \sum_{n=1}^p\{\partial _{\varphi_n}\sigma
_{ij}\}(\phi,t
)\dy{m}\phi_n
$$ 
and if (\ref{grad}) is true, one obtains
$$
\dy{m} \{\sigma_{ij}(\phi,t)\} =\sum_{l=1}^d\dy{m}\dy{l}\phi_i\;(U_{s,t}\phi )_{lj}.
$$
Abbreviating notation $U_{m k}(\phi,t)=(U_{s,t}\phi) _{mk}$,
multiplying the last two equalities by $U_{m k}$, summing over $m$ and using (\ref{grad}) again,
 one finds that
\begin{equation}\label{iden}
\sum_{n=1}^p\{\partial _{\varphi_n}\sigma _{ij}\}(\phi,t )\sigma
_{nk}(\phi,t )=\sum_{m=1}^d\sum_{l=1}^d \partial _{y_m}\partial
_{y_l}\phi_i\;U_{lj}(\phi,t)U_{mk}(\phi,t),
\end{equation}
and, taking $k=j$ and summing over $j$, one has that
\begin{equation}\label{corrreduce}
\!\sum_{j=1}^d \{\nabla_\varphi \sigma_j\}(\phi,t)\sigma_j (\phi,t) =\!
\sum_{l=1}^d\sum_{m=1}^d  \;(U(\phi,t)U^{\top }(\phi,t))_{lm}  \dy{m}\dy{l} \phi .
\end{equation}
Therefore, if (\ref{grad},\ref{dtphi},\ref{initphi}) are satisfied, then clearly $X_t$ is a local strong solution to
(\ref{SDE4}) by (\ref{bh}).
Moreover, letting $t\searrow s$, we find by (\ref{grad},\ref{dtphi},\ref{initphi}) that
\[
\sigma(x,s)=\nabla_y \phi^{x,s}(0,s)\ \text{ and } h(x,s)=\partial_t \phi^{x,s}(0,s)
\]
so $\sigma,h\in C^1$ by the last part of Definition \ref{RepPair}.

To show a) implies b), we suppose $X_t$ is a strong solution to
(\ref{SDE4}) on $(s,\tau^{x,s})$.
Then, since continuous finite-variation martingales are
constant, the (continuous) It\^o process 
$\phi(Y_t,t)$ from (\ref{ctscoef}) matches (\ref{SDE4}) if
and only if
\begin{equation} \label{sigma0}
\sigma _{ij}(\phi,t )=\sum_{m=1}^d\partial _{y_m}\phi _i\;(U_{s,t}\phi)_{mj}\;\forall 1\le i\leq
p,\;1\le j\le d,
\end{equation}
and
\begin{equation} \label{b0}
b_i(\phi,t )=\partial _t\phi_i+{\frac
12}\sum_{j=1}^d\sum_{k=1}^d\partial _{y_j}\partial
_{y_k}\phi_i\;(U_{s,t}\phi (U_{s,t}\phi)^{\top })_{jk}\;\forall 1\le i\le p
\end{equation}
for all $t\in(s,\tau^{x,s})$.
Rewriting (\ref{sigma0}) in matrix form, one finds
\begin{equation}
\sigma (\phi(Y_t,t),t )=\{\nabla _y\phi(Y_t,t) \}U_{s,t}\phi \label{crosscross},
\end{equation}
and (\ref{grad}) is true.
Now, we can use (\ref{corrreduce}) (which was just shown to be a consequence of (\ref{grad}))
to find (\ref{b0}) is equivalent to
\begin{equation} \label{N4.7a}
\partial _t\phi =b(\phi,t )-{\frac 12}\sum_{k=1}^d\{\nabla_x \sigma _k\}(\phi,t
)\;\sigma _k(\phi,t )=h(\phi,t ),
\end{equation}
using (\ref{bh}). 
Now, (\ref{dtphi}) follows by continuity and (\ref{Rdefn}).
Letting $t\searrow s$ in (\ref{crosscross}) and (\ref{N4.7a}), one finds
\[
	\sigma(x,s)=\nabla_y \phi^{x,s}(0,s)\ \text{ and } h(x,s)=\partial_t \phi^{x,s}(0,s)
\]
so the last part of Definition \ref{RepPair}
follows from the $C^1$ property of $h,\sigma$.
\end{proof}

\subsection{Proof of Theorem \ref{r1} b) is equivalent to c).}\label{Th1proof}
{\bf Idea:}
Below we show that the existence of a representation without the commutator conditions leads
to a contradiction and the commutator conditions yield a representation.
\begin{proof}
By exactness of differential 1-forms, the existence of our
function $\phi^{x,s}$ satisfying ((\ref{grad}), (\ref{dtphi}) and
(\ref{initphi})) is equivalent to the following two conditions:
\begin{equation}\label{exact1}
	\partial_{y_j}\{\sigma(\phi,t)(U^{-1}_{s,t}\phi)_k \}=
\partial_{y_k}\{\sigma(\phi,t)(U_{s,t}^{-1}\phi)_j\}
\end{equation}
and
\begin{equation}\label{exact2}
\frac{d}{dt}\{\sigma(\phi,t)(U_{s,t}^{-1}\phi)_k \}=
\partial_{y_k}h(\phi,t).
\end{equation}
We show (\ref{exact1}) and (\ref{exact2}) for all starting points $(x,s)$ are equivalent to (\ref{braket2}) and (\ref{h}) respectively.\\
{\bf Step 1:} Show that (\ref{braket2}) implies (\ref{exact1}) (under (\ref{grad})).\\
It follows by (\ref{grad}) and $\mathcal C_2$, $\mathcal C_3$ that
\begin{eqnarray}\label{N4.7b}
\partial_{y_j}\!\!\!\!\!\!\!&\!\!\!\!\!\!\!\!\!\!\!\!&\!\!\!\!\!\!\!\!\{\sigma(\phi,t)(U_{s,t}^{-1}\phi)_k \}\\
&=&
\sum_m \{\partial_{y_j} \sigma_m(\phi,t)\}(U_{s,t}^{-1}\phi)_{mk}\ \ \nonumber\\
&=&\sum_m \nabla_\phi \sigma_m(\phi,t)\sigma(\phi,t)(U_{s,t}^{-1}\phi)_j (U_{s,t}^{-1}\phi)_{mk}\ \ \nonumber\\
&=&\sum_m\sum_n \nabla_\phi \sigma_m(\phi,t)\sigma_n(\phi,t)(U_{s,t}^{-1}\phi)_{nj} (U_{s,t}^{-1}\phi)_{mk}\ \ \nonumber
\end{eqnarray}
and similarly
\begin{eqnarray}\label{Nov25}
\partial_{y_k}\!\!\!\!\!\!\!&\!\!\!\!\!\!\!\!\!\!\!\!&\!\!\!\!\!\!\!\!\{\sigma(\phi,t)(U_{s,t}^{-1}\phi)_j \}\\
&=&\sum_n \sum_m\nabla_\phi \sigma_n(\phi,t)\sigma_m(\phi,t)(U_{s,t}^{-1}\phi)_{mk} (U_{s,t}^{-1}\phi)_{nj}\nonumber
\end{eqnarray}
Hence, (\ref{exact1}) holds when (\ref{braket2}) holds.\\
{\bf Step 2:} Show that (\ref{exact1}) implies (\ref{braket2}) (under (\ref{grad})).\\
Letting $t\searrow s$ in (\ref{N4.7b}) and (\ref{Nov25}), one finds by (\ref{exact1})
that for all $1\le j,k \le d$,
\begin{eqnarray}
&&\sum_m\sum_n \nabla_x \sigma_m(x,s)\sigma_n(x,s)(U_{s,s}^{-1}\phi)_{nj} (U_{s,s}^{-1}\phi)_{mk}\\
&=&\lim_{t\searrow s}\partial_{y_j}\{\sigma(\phi,t)(U_{s,t}^{-1}\phi)_k \}\ \ \nonumber\\
&=&\lim_{t\searrow s}\partial_{y_k}\{\sigma(\phi,t)(U_{s,t}^{-1}\phi)_j \}\ \ \nonumber\\
&=&\sum_m\sum_n\nabla_x \sigma_n(x,s) \sigma_m(x,s)(U_{s,s}^{-1}\phi)_{nj} (U_{s,s}^{-1}\phi)_{mk}.\nonumber
\end{eqnarray}
However, $U_{s,s}^{-1}\phi=I$ 
so we have that
$$
(\nabla_x \sigma_q)(x,s)\sigma_p (x,s)= (\nabla_x
\sigma_p)(x,s)\sigma_q(x,s).
$$
Hence, (\ref{braket2})
holds when (\ref{exact1}) does.\\
{\bf Step 3:} Show that (\ref{exact2}) implies (\ref{h}) (under (\ref{grad},\ref{dtphi})).\\
One gets by (\ref{grad}) that
\begin{equation}
\partial_{y_k}h(\phi,t) = \nabla_\phi h(\phi,t)\partial_{y_k}\phi(y,t)=\nabla_\phi h(\phi,t)\sigma(\phi,t)(U_{s,t}^{-1}\phi)_k
\end{equation}
and by (\ref{exact2}), (\ref{dtphi}) that
\begin{eqnarray}
\partial_{y_k}h(\phi,t)\!\! &=&\!\! \frac{d}{dt}\{\sigma(\phi,t)(U_{s,t}^{-1}\phi)_k \}\\
&=& \!\!\sum_m\nabla_\phi \sigma_m(\phi,t)h(\phi,t)(U_{s,t}^{-1}\phi)_{mk}+\partial_{t}\sigma(\phi,t)(U_{s,t}^{-1}\phi)_k\nonumber\\
&-&\!\!\sigma(\phi,t)U_{s,t}^{-1}\phi\sum_m\frac{d}{dt}(U_{s,t}\phi)_m(U_{s,t}^{-1}\phi)_{mk}.\nonumber
\end{eqnarray}
Combining these equations, multiplying by $(U_{s,t}\phi)_{kn}$ and summing, we get
\begin{eqnarray}
\nabla_\phi h(\phi,t)\sigma_n(\phi,t)&=&\nabla_\phi
\sigma_n(\phi,t)h(\phi,t)+\partial_{t}\sigma_n(\phi,t)\\
&-&\sigma(\phi,t)U^{-1}_{s,t}\phi^{x,s}\frac{d}{dt}(U_{s,t}\phi^{x,s})_n\nonumber
\end{eqnarray}
so, letting $t\searrow s$ and using (\ref{SecondA},\ref{initphi}), one arrives at (\ref{h}).\\
{\bf Step 4:} Show that (\ref{h}) implies (\ref{exact2}) (under (\ref{grad},\ref{dtphi})).\\
Using (\ref{dtphi}) and (\ref{Ugroup}), we get that
\begin{eqnarray*}
&&\frac{d}{dt}\{\sigma(\phi,t)(U^{-1}_{s,t}\phi)_k\}\\
&\!\!=&\!\! \sum_n\Big[\{\nabla_\phi \sigma_n(\phi,t)\}h(\phi,t)
+\{\partial_t\sigma_n(\phi,t)\}\Big](U^{-1}_{s,t}\phi)_{nk}\\
&\!\!&\!\!\quad -\sum_n\sigma(\phi,t)U^{-1}_{s,t}\phi\left(\frac{d}{dt}(U_{s,t}\phi)_n\right)(U^{-1}_{s,t}\phi)_{nk}\\
&\!\!=&\!\! \sum_n\Big[\{\nabla_\phi \sigma_n(\phi,t)\}h(\phi,t)
+\partial_t\sigma_n(\phi,t)\\
&\!\!&\!\!-\sigma(\phi,t)\left(\frac{d}{dt}U_{u,t}\phi^{\phi_u,u}\right)_n\big|_{u=t}\Big]
(U^{-1}_{s,t}\phi)_{nk},
\end{eqnarray*}
where $\phi_u$ is short for $\phi^{x,s}(y_u,u)$.
Hence, by (\ref{SecondA}), (\ref{h}) applied at $\varphi=\phi$ and (\ref{grad})
\begin{eqnarray*}
\frac{d}{dt}\{\sigma(\phi,t)(U^{-1}_{s,t}\phi)_k\}
&\!\!=&\!\!\{\nabla_\phi h(\phi,t)\} \sigma(\phi,t) (U^{-1}_{s,t}\phi)_k \\
&\!\!=&\!\!\partial_{y_k}h(\phi,t)
\end{eqnarray*}
and we have (\ref{exact2}).
\end{proof}

\subsection{Proof of Proposition \ref{diff1}}
Our methods
are motivated in part by \citet{Brickell/Clark:1970}[Propositions 8.3.2 and 11.5.2].\\
We let
\[
	(q,D^2_T) = \left\{\begin{array}{ll}
			(p+1,D\times (-T,T)) & \mbox{if $\sigma$ or $h$ depend on $t$}\\
			(p,D) & \mbox{otherwise}
\end{array} \right.,
\]
take $\sigma_{p+1}=0$ if $q>p$,
set $\partial_t\sigma(x,t)=\partial_t\sigma(x,0)$,
$\partial_{x_i}\sigma(x,t)=\partial_{x_i}\sigma(x,0)$ for $t<0$, $i=1,2,...,q$
and use exactness of the corresponding $1$-form
to extend $\sigma$ uniquely to $D^2_T$ such that
$\sigma\in C^1(D^2_T;\mathbb R^{q\times d})$.
By reducing $T>0$ if necessary, we can find a permuation $\pi$ such that the first
$r$ columns of $\sigma^\pi=\sigma\pi$ are linearly independent on $D_T^2$.

\begin{proof}
Fix $\widehat x=(\widehat x_1,...,\widehat x_q)\in D_T$.
The $C^1$-diffeomorphism $\Lambda$ will have form:
\begin{eqnarray}
	\!\!\Lambda&\!\!=&\!\!\Lambda^{r,1},\ \ \text{where }\ \Lambda^{i,1}=\Lambda^{i}\circ\Lambda^{i-1}\circ\cdots\circ\Lambda^{2}\circ\Lambda^{1},\\\label{DiffForm}
\Lambda^{i}&\!\!=&\!\!\sum_{j=1}^{i-1}x_je_j+\left[\begin{array}{c}H^i(x_i,...,x_q)\\L^i(x_i,...,x_q)\end{array} \right]\ \ \text{and }\ H^i(x_i,...,x_q)\in\mathbb R^{i-1},\
\end{eqnarray}
and $\widehat \sigma_i$ will be defined as
$\widehat \sigma_i\doteq\{(\nabla_\varphi \Lambda)\sigma^\pi_i\}\circ \Lambda^{-1}$.
Here, 
$\Lambda^i$ is a $C^1$-diffeomorphism  
on a neighborhood $O^{\widehat x^{i-1}}$ of $\widehat x^{i-1}=
\Lambda^{i-1,1}(\widehat x)$
so $\Lambda:O^{\widehat x}\rightarrow \mathbb R^q$.

To construct $\Lambda^i$ recursively starting with $\Lambda^1$, we 
suppose $\widehat \sigma_j=e_j$ for $j<i$ and
\begin{equation}\label{alphadef}
\alpha_i\doteq\{\nabla\Lambda^{i-1,1}\sigma^\pi_i\}\circ(\Lambda^{i-1,1})^{-1}
\end{equation}
does not depend upon $x_1,...,x_{i-1}$, which are certainly true when $i=1$.
Moreover, without loss of generatlity, we assume the $i^{th}$ component of $\alpha_i$ satisfies 
$\alpha_{i,i}\ne 0$ (or else we change $\pi$ by permuting columns $i,...,d$
of $\sigma^\pi$).
Set $\psi^i(x) = \theta(x_i -\widehat x_{i}^{i-1}; x_1,...,x_{i-1},\widehat x_i^{i-1},x_{i+1},...,x_q)$,
where $\theta$ satisfies
$\frac{d}{dt}\theta(t;x)=\alpha_i(\theta(t;x))$, $\theta(0;x)=x$
for $t\in I^x$, an open interval containing $0$, and $x$
in a neighborhood containing $\widehat x^{i-1}$.
Then, $\dx{i} \psi^i = \alpha_i(\psi^i)$. 
For $j\ne i$, we have $\partial_{x_j}\psi^i(x)=\partial_{x_j}\theta(x_i -\widehat x_{i}^{i-1};x_1,...,x_{i-1},\widehat x_i^{i-1},x_{i+1},...,x_q)$ and
$$
\partial_t\partial_{x_j}\theta(t;x)=\partial_{x_j}\alpha_i(\theta(t;x))\ \text{  s.t. }\ \partial_{x_j}\theta(0,x)=e_j
$$
so $\nabla \psi^i(\widehat x^{i-1})$ has determinant
$\alpha_{i,i}(\widehat x^{i-1}) \neq 0$.
Thus, $\psi^i$ has inverse $\Lambda^i\in C^2(O^{\widehat x^{i-1}},\mathbb R^{q})$ 
and $\n \Lambda^i = [\n \psi^i]^{-1}(\Lambda^i)$ on neighborhood
$O^{\widehat x^{i-1}}$ 
of $\widehat x^{i-1}$ by the Inverse Function Theorem.
Hence, $\nabla \Lambda^i((\Lambda^i)^{-1}) \nabla \psi^i = I$ and 
\begin{equation}\label{OneCol}
\widehat \sigma_i=\{ \nabla \Lambda^i\alpha_i \}(\Lambda^i)^{-1} =e_i\in\mathbb R^{q}.
\end{equation}
Moreover, $\Lambda^i$ has the form (\ref{DiffForm}) if $\psi^i$ has similar form.  
$\psi^i$ has this form by its definition 
as well as the facts $\alpha_i$ is locally Lipschitz and does not depend upon $x_1,...,x_{i-1}$.
Next,
\begin{equation}\label{TransCom}
(\nabla \widehat \sigma_j) \widehat \sigma_k - (\nabla \widehat \sigma_k)\widehat \sigma_j = (\nabla \sigma^\pi_j)
\sigma^\pi_k -(\nabla \sigma^\pi_k)\sigma^\pi_j=0\ \ \forall\ \ 1\le k , j\le d
\end{equation}
by Lemma \ref{dc}. 
Now, since $\widehat \sigma_k=e_k\in\mathbb R^q$ for $1\le k\le i$, (\ref{TransCom}) implies
$$
(\nabla \widehat \sigma_j) e_k = (\nabla \widehat \sigma_j) e_k - (\nabla e_k)\widehat \sigma_j = 0\ \ \forall\ 1\le k \leq i<j 
$$
on a neighborhood $O$ of $\widehat x$.
Therefore, $\widehat \sigma_j$ and (by a similar argument) $\alpha_{i+1}$ can not 
depend upon $x_1, \ldots, x_i$ so we can take $i=r$ by induction and
$$
\widehat \sigma =
\{(\nabla \Lambda) \sigma^\pi\}\circ \Lambda^{-1} =
\left(\begin{array}{cc} I_r & \overline \kappa\\ 0 & \widetilde \kappa \end{array}\right)
\in \mathbb R^{q\times d}  \mbox{ on } \Lambda(O\cap D_T),
$$
where $\overline \kappa \in {\mathbb R}^{r\times (d-r)}$  and
$\widetilde \kappa \in {\mathbb R}^{(q-r)\times (d-r)}$ do
not depend on the variables
$x_1, \ldots, x_r$.
Since $\widehat \sigma$ has also rank $r$, it follows that $\widetilde \kappa =0$.
\end{proof}

\bibliographystyle{apalike}

\bibliography{sde6}
\bigskip

\end{document}